\documentclass[11pt,letterpaper]{amsart}
\textwidth=14.5cm \oddsidemargin=1cm
\evensidemargin=1cm
\usepackage{amsmath}
\usepackage{csquotes}
\usepackage{calrsfs}
\DeclareMathAlphabet{\pazocal}{OMS}{zplm}{m}{n}
\newcommand{\Xx}{\pazocal{X}}

\usepackage{enumerate}
\usepackage{csquotes}
\usepackage{amsxtra}
\newcommand{\quotes}[1]{``#1''}
\usepackage{scalerel}
\usepackage{hyperref}
\usepackage{enumitem}
\usepackage{floatrow}
\usepackage[new]{old-arrows}
\usepackage{amssymb}
\usepackage{tikz-cd}

\theoremstyle{plain}

\newtheorem{thm}{Theorem}[section]

\newtheorem{Theo}[thm]{Theorem}
\newtheorem{fact}[thm]{Fact}

\newtheorem{Prop}[thm]{Proposition}

\newtheorem*{Theorem}{Theorem}
\newtheorem{cor}[thm]{Corollary}

\newtheorem{Conj}{Conjecture}
\newtheorem{question}[thm]{Question}

\theoremstyle{definition}

\newtheorem{Lemm}[thm]{Lemma} 

\newtheorem{definition}[thm]{Definition}
\theoremstyle{remark}

\newtheorem{Obs}[thm]{Observation}
\newtheorem{rem}[thm]{Remark}

\newtheorem{example}[thm]{Example}

\newtheorem{ex}[thm]{Exercise}

\newcommand{\nc}{\newcommand}

\nc{\bp}{\begin{Prop}}
\nc{\ep}{\end{Prop}}
\nc{\ssn}{\pagebreak \section}
\nc{\df}{{\bf Definition}\ }
\nc{\bl}{\begin{Lemm}}
\nc{\el}{\end{Lemm}}
\nc{\Pic}{\text{Pic}}
\nc{\bex}{\begin{ex} \rm}
\nc{\eex}{\end{ex}}
\nc{\bt}{\begin{Theo}}
\nc{\et}{\end{Theo}}
\nc{\bq}{\begin{question}}
\nc{\eq}{\end{question}}
\nc{\bc}{\begin{cor}}
\nc{\ec}{\end{cor}}
\nc{\bob}{\begin{Obs}}
\nc{\eob}{\end{Obs}}
\nc{\N}{\mathbb{N}}
\nc{\Q}{\mathbb{Q}}
\nc{\Z}{\mathbb{Z}}
\nc{\Ss}{{\mathbb{S}}}
\nc{\Cc}{{\mathbb{C}}}
\nc{\F}{{\mathbb{F}}}
\nc{\Oo}{\mathcal{O}}
\nc{\Qq}{\mathbb{Q}}
\nc{\ulim}{\text{ulim}\ }
\nc{\Hom}{\text{Hom}}
\nc{\Ext}{\text{Ext}}
\nc{\Tor}{\text{Tor}}
\nc{\Ob}{\text{Ob}}
\nc{\id}{\text{id}}
\nc{\ad}{\text{ad}}
\nc{\an}{\text{an}}
\nc{\rig}{\text{rig}}
\nc{\Spa}{\text{Spa}}
\nc{\ZR}{\text{ZR}}
\nc{\Hilb}{\text{Hilb}}
\nc{\supp}{\text{supp}}
\nc{\Spf}{\text{Spf}}
%\nc{\Oo_K-mod}{\Oo_K\text{-mod}}
%\nc{\Oo_K-umod}{\Oo_K\text{-umod}}

\renewcommand{\mkbegdispquote}[2]{\itshape}
 \nc{\Spec}{{\mathop{\operatorname{\rm Spec}}}}
  \nc{\spec}{{\mathop{\operatorname{\rm Spec}}}}

\usepackage{yhmath}

\usepackage[utf8]{inputenc}

\title{Perfectoid $C_i$ transfer}
%Rationally connected varieties over henselian fields}
\author{Konstantinos Kartas}
\thanks{The author thanks the Max Planck Institute for Mathematics in Bonn for its hospitality and financial support. He also thanks the Alexander von Humboldt Foundation for its financial support during the revision of this work.}
%\date{June 2023}

\begin{document}

\maketitle
\begin{abstract}
We prove a perfectoid analogue of the Ax-Kochen theorem on zeros of $p$-adic forms: Given $d\in \N$, there is a finite totally ramified extension $E/\Q_p$ such that every untilt of $\F_p(\!(t^{1/p^{\infty}})\!)$ containing $E$ is $C_2(d)$. We also prove a similar result for the existence of rational points in rationally connected varieties over perfectoid field extensions of $\Q_p^{ur}$.
%. Given $d\in \N$, there is  a finite extension $E/\Q_p^{ur}$ such that every untilt of the completed perfect hull of $\overline{\F}_p(\!(t)\!)$ containing $E$ is geometrically $C_1$ up to dimension $d$.
%admits rational points in every rationally connected variety of dimension at most $d$.
%We prove that every perfect PAC field is $C_1$.
%and the same is true for the Hahn series field $\F_q(\!(t^{\Q})\!)$.
%Let $(K,v)$ be a henselian valued field of equal characteristic $0$ with divisible value group. Then, $K$ is geometrically $C_1$ if and only if $k$ is geometrically $C_1$.
%\\
%(2) Prove that $\Q_p(p^{1/p^{\infty}})$ is $C_2$.
\end{abstract}

\section{Introduction}
A field $k$ is called $C_i$ ($i\in \N$) if every non-constant homogeneous polynomial  \linebreak
$f(X_0,...,X_n)\in k[X_0,...,X_n]$ of degree $d$ with $ d^i\leq n$ has a non-trivial zero over $k$. Given $d\in \N$, we also say that $k$ is $C_i(d)$ if it satisfies the above condition restricted to polynomials of degree $\leq d$. Thus, knowing that $k$  is $C_i$ (or $C_i(d)$) gives us some information about the polynomial equations which admit non-trivial solutions in $k$.
\subsection*{Ax-Kochen theorem}
%http://www.numdam.org/item/STNB_1969-1970____A8_0.pdf
In 1936, E. Artin conjectured that $\Q_p$ is a $C_2$ field. Strong evidence for this conjecture was given by Lang, who showed that $\F_p(\!(t)\!)$ is $C_2$. Ax-Kochen \cite{AK12} famously proved an asymptotic version of Artin's conjecture using model theory: 
\begin{Theorem}[Ax-Kochen]
For every $d\in \N$, there exists $N(d)\in \N$ such that $\Q_p$ is $C_2(d)$ for every $p>N(d)$. 
\end{Theorem}
They deduce this from a general transfer principle between $\Q_p$ and $\F_p(\!(t)\!)$, proved independently by Ershov \cite{Ershov}, which says that for any sentence  $\varphi$ in the language of rings, there exists $N(\varphi)\in \N$ such that 
$$\varphi \mbox{ holds in }\Q_p \iff \varphi \mbox{ holds in } \F_p(\!(t)\!)$$ 
for all $p>N(\varphi)$. This, in turn, is a consequence of a more fundamental theorem about henselian valued field of equal characteristic $0$, saying that two such valued fields are elementarily equivalent precisely when their value groups and residue fields are. The asymptotic transfer between $\Q_p$ and $\F_p(\!(t)\!)$ can  easily be deduced using ultraproducts.
%One then takes a non-principal ultraproduct of $\Q_p$ and shows that it has the same theory of an ultraproduct of $\F_p(\!(t)\!)$. Since the latter is a $C_2$ field, so is the former.
%Somewhat surprisingly, Artin's conjecture in its original form is false, as was demonstrated by Terjanian \cite{Terj1,Terj2}. 

This transfer principle has several other applications. A recent one, due to Duesler-Knecht \cite{duesler}, is related to the problem of whether $\Q_p^{ur}$ is geometrically $C_1$. Recall that a \textit{geometrically $C_1$ field} is one which admits a rational point on every smooth projective separably rationally connected variety (see \S \ref{geomc1}).
 %of rational points in rationally connected varieties over $p$-adic fields. 
This problem is a prominent open case of the Lang-Manin conjecture which predicts that every $C_1$ field is geometrically $C_1$, see \cite{esnaultc1}. We note that $\Q_p^{ur}$ is known to be $C_1$ by a theorem of Lang \cite{langc1}. Duesler-Knecht prove the following asymptotic result: 
\begin{Theorem}[Duesler-Knecht]
For every $f \in \Q[t]$, there exists $N(f)\in \N$ such that for every prime $p>N(f)$, every rationally connected variety over $\Q_p^{ur}$ with Hilbert polynomial $f$ has a rational point. 
\end{Theorem}
Their proof relies on the Graber-Harris-Starr theorem \cite{graber}, which asserts that function fields of complex curves are geometrically $C_1$. More precisely, one needs the local version, due to Colliot-Th\'el\`ene's \cite{colliot}, that $\Cc(\!(t)\!)$ is geometrically $C_1$. 
%This was deduced by  from the function field case via a Greenberg approximation argument. To prove the above  $p$-adic statement, one uses the theorem of Ax-Kochen/Ershov. 
%By Ax-Kochen/Ershov, every non-principal ultraproduct of the $p$-adic fields $\Q_p^{ur}$ has the same theory as $\Cc(\!(t)\!)$. This means that a sentence in the language of rings which is true in $\Cc(\!(t)\!)$ will also be true in $\Q_p^{ur}$ for sufficiently large $p$. For fixed $f\in \Q[t]$, Duesler-Knecht prove that the property of \quotes{having a rational point in every rationally connected variety with Hilbert polynomial $f$} can be expressed by such a sentence, thus concluding the proof.  
%The latter which itself is deduced from the function field case due to Graber-Harris-Starr \cite{graber}. 
The result of Duesler-Knecht was later improved by Pieropan \cite{pieropan}, replacing the dependence on the Hilbert polynomial by dependence only on the dimension of the variety:
\begin{Theorem}[Pieropan]
For every $d\in \N$, there exists $N(d)\in \N$ such that $\Q_p^{ur}$ is geometrically $C_1$ up to dimension $d$ for every $p>N(d)$.
\end{Theorem}
It is an interesting question whether anything can be said about such problems for fixed $p$. Somewhat surprisingly, $\Q_p$ is not $C_2$ for any $p$, due to examples of Terjanian \cite{Terj1,Terj2}.
As for the Lang-Manin problem for $\Q_p^{ur}$, there is only a partial result due to Esnault-Levine-Wittenberg \cite{esnaultindex}, which says that every rationally connected variety over $\Q_p^{ur}$ has a zero-cycle of $p$-power degree.  
%, the only known result seems to be \cite{esnaultindex}. 
The purpose of this paper is to illustrate how recent work of Jahnke and the author \cite{JK} can be used to prove analogous results to the ones presented above, for fixed $p$ but high ramification.

\subsection*{Main results}
We write $\F_p(\!(t^{1/p^{\infty}})\!)$ for the completed perfection of $\F_p(\!(t)\!)$.
Our first main result is a perfectoid analogue of the Ax-Kochen theorem:
%We also obtain some unconditional results for perfectoid $C_i$ transfer. Unfortunately, for now, we have to take a finite extension $E/\Q_p$ into the bargain. 
%For example, we obtain:
%\subsubsection*{Perfectoid $C_i$ transfer}
\bt \label{firstmain}
For every $d\in \N$, there exists a finite totally ramified extension $E/\Q_p$ such that every untilt of $\F_p(\!(t^{1/p^{\infty}})\!)$ containing $E$ is $C_2(d)$. 
\et
%In general, the untilts of $\F_p(\!(t^{1/p^{\infty}})\!)$ containing $E$ are completely described in terms of the ramified Witt vectors, see . 

The result remains valid for any algebraic extension of $\Q_p$ containing $E$ whose completion tilts to $\F_p(\!(t^{1/p^{\infty}})\!)$. We therefore obtain plenty of algebraic extensions of $\Q_p$ which are $C_2(d)$. 
For example, given  $d\in \N$, there is $E/\Q_p$ as above such that  $E(\pi^{1/p^{\infty}})$ is $C_2(d)$, for any uniformizer $\pi$  of $E$. In \S \ref{RSC}, we prove a generalization of the above result  for rationally simply connected varieties, building on Starr-Xu \cite{starrxu}.
%As a consequence, we obtain the following:
%\bc
%Fix  $d\in \N$. Then there exists a finite totally ramified extension $E/\Q_p$ such that $E(\pi^{1/p^{\infty}})$ is $C_2(d)$, where $\pi$ is any uniformizer of $E$.
%\ec

Our second main result was motivated by a question of Esnault from 2014 (cf. \S 5.3 \cite{esnaultc1}), who asked whether anything interesting can be said about the Lang-Manin problem for $\Q_p^{ur}$ after passing to a perfectoid field extension. This is a natural question, since $\overline{\F}_p(\!(t^{1/p^{\infty}})\!)$ is geometrically $C_1$, as follows from de Jong–Starr \cite{dejongstarr}.
\bt \label{secondmain}
For every $d\in \N$, there exists  a finite extension $E/\Q_p^{ur}$ such that every untilt of $\overline{\F}_p(\!(t^{1/p^{\infty}})\!)$ containing $E$ is geometrically $C_1$ up to dimension $d$. 
\et 
%A field $k$ is \textit{geometrically }$C_1$\textit{ up to dimension $d$ }if every smooth projective separably rationally connected $k$-variety of dimension $\leq d$ has a $k$-rational point. 
%We note that it has been a folklore suggestion of H. Esnault whether anything interesting can be said about the Lang-Manin problem for $\Q_p^{ur}$, after perhaps replacing it with a perfectoid 
We do not know if the finite extension $E/\Q_p$ is necessary in the above results. It may well be the case that every untilt of $\F_p(\!(t^{1/p^{\infty}})\!)$ is $C_2$ and every untilt of $\overline{\F}_p(\!(t^{1/p^{\infty}})\!)$ is geometrically $C_1$. As for the latter, we show in \S \ref{conditional} that it follows from a standard conjecture on degenerations of rationally connected varieties: 
\begin{Conj} \label{kollcoll}
Let $R$ be a DVR with fraction field $K$ and residue field $k$. Let $X$ be a flat, projective $R$-scheme such that the generic fiber $X_K$ is smooth and separably rationally connected. Then, the special fiber $X_k$ admits a $k$-morphism from a smooth projective separably rationally connected $k$-variety.
\end{Conj}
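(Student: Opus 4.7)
I would attempt Conjecture~\ref{kollcoll} through a semistable reduction of $X/R$ followed by a degeneration-of-curves argument. After replacing $R$ by its completion --- which leaves the special fiber unchanged --- I would pass to a finite extension $R'/R$ and invoke de Jong's alteration theorem, or semistable reduction in the style of Abramovich--Karu, to obtain a regular, flat, projective model $X' \to \Spec(R')$ whose special fiber $X'_{k'} = \sum_i n_i F_i$ has simple normal crossings. Because separable rational connectedness is preserved under generically \'etale alterations of smooth projective varieties, the generic fiber $X'_{K'}$ is still SRC.

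The heart of the matter is then to locate an irreducible component $F_i$ of $X'_{k'}$ which is itself separably rationally connected over $k'$. The mechanism one hopes for is that very free rational curves on $X'_{K'}$ degenerate to combs of rational curves on $X'_{k'}$ with a distinguished handle lying inside one component $F_i$, and that this handle remains free there. In characteristic zero this is known, essentially via Graber--Harris--Starr together with subsequent work on dual complexes of snc degenerations of rationally connected varieties, which yields uniruledness (and more) of at least one component. The main obstacle --- and the reason the statement remains conjectural --- is to control \emph{separability} of the limit curves in positive or mixed residue characteristic, where the free locus can easily be destroyed by Frobenius. I expect this step to require either fine control on the Kodaira--Spencer map of the degeneration, or a clever choice of alteration that tames wild ramification so that limits of very free curves really are very free on the chosen component.

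Finally, once a smooth projective SRC variety $Y/k'$ together with a dominant $k'$-morphism $Y \to F_i \hookrightarrow X'_{k'} = X_k \otimes_k k'$ has been produced, one would compose with the projection to $X_k$ and descend the resulting morphism. In the case when semistable reduction can be arranged without residue-field extension, the composition already furnishes a $k$-morphism from an SRC $k$-variety, which is what the conjecture asks for. Otherwise, Weil restriction or Galois descent takes care of separable residue-field extensions $k'/k$, while inseparable residue-field extensions arising from wild base changes in mixed characteristic introduce a further delicate point that I expect to be the remaining obstruction after the characteristic-$p$ freeness issue is resolved.
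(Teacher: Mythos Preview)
The statement you are attempting to prove is labeled \emph{Conjecture} in the paper, and the paper does not prove it. Immediately after stating it, the author writes that Conjecture~\ref{kollcoll} is known in equal characteristic $0$ by Hogadi--Xu, but \emph{is still open in mixed and positive characteristic}; it is then only used as a hypothesis in Theorems~\ref{conditionperf} and~\ref{geomc1trans}. So there is no ``paper's own proof'' to compare against.

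Your proposal is not a proof but a sketch of the expected line of attack together with an honest identification of the obstructions, and you yourself flag the decisive gap: after semistable reduction one does not know, in residue characteristic $p>0$, that limits of very free rational curves remain very free (or even free) on some component, because Frobenius can destroy the positivity of the normal bundle. This is exactly why the conjecture is open, and nothing in your outline closes that gap. A second genuine issue is the descent step at the end: if semistable reduction forces a residue field extension $k'/k$, producing an SRC $k'$-variety $Y$ with a morphism to $X_k\otimes_k k'$ does not readily yield an SRC $k$-variety mapping to $X_k$. Weil restriction along $k'/k$ of an SRC variety is not known to be SRC in general, and Galois descent requires equivariant data you have not arranged; the inseparable case you mention is worse still. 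So even granting the characteristic-$p$ freeness step, the argument as written would not conclude.
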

If in addition $X$ is regular and $X_k$ is a strict normal crossings divisor of $X$, it is expected that one of the irreducible components of $X_k$ is separably rationally connected. The conjecture was originally proposed by Koll\'ar with $R$ being the local ring at a $k$-point of a smooth curve over a characteristic $0$ field (see Question 11 \cite{kollar}). The extension to arbitrary DVRs is a suggestion of Colliot-Th{\'e}l{\`e}ne (see Suggestions 7.9 \cite{colliot}). Conjecture \ref{kollcoll} is true in equal characteristic $0$ due to Hogadi-Xu \cite{hogadi} (see also Th\'eor\`eme 7.15 \cite{colliot}). We refer the reader to \S 7 \cite{colliot} for partial results in arbitrary characteristic.
%In mixed and positive characteristics, it is still open; 
%it has only been verified for special fields $k$, see \S 7 \cite{colliot}.

Assuming this conjecture, we prove a perfectoid \quotes{geometric $C_1$} transfer:
\bt \label{conditionperf}
Assume Conjecture \ref{kollcoll}. Let $K$ be a perfectoid field with tilt $K^{\flat}$ which is geometrically $C_1$. Then $K$ is geometrically $C_1$.
\et  
For example, assuming Conjecture \ref{kollcoll}, every untilt of $\overline{\F}_p(\!(t^{1/p^{\infty}})\!)$ is geometrically $C_1$. As before, the result remains valid for algebraic extensions of $\Q_p^{ur}$ which tilt to $\overline{\F}_p(\!(t^{1/p^{\infty}})\!)$, like $\Q_p^{ur}(p^{1/p^{\infty}})$. This implies  that every smooth projective rationally connected variety $X$ over $\Q_p^{ur}$ has a closed point of $p$-power degree. As noted earlier, Esnault-Levine-Wittenberg \cite{esnaultindex} unconditionally proved a weaker statement, namely that such an $X$ has a zero-cycle of $p$-power degree. 
 
\subsection*{Leitfaden}
%The proofs are, in a way, similar to the proof of the Ax-Kochen theorem. For the Ax-Kochen theorem, one utilizes 
We sketch the proof of Theorem \ref{firstmain}, which illustrates many of the key ideas in this paper. Let $\kappa$ be an untilt of $\kappa^{\flat}=\F_p(\!(t^{1/p^{\infty}})\!)$. 
\subsubsection*{Step 1: Model theory of perfectoid fields}
The main result of \cite{JK} says that, grosso modo, $\kappa^{\flat}$ behaves like a residue field of $\kappa$. To make this precise, we need to replace $\kappa$ with some non-principal ultrapower, say $K$. Then, there is valuation $v$ on $K$, whose residue field $k$ is an elementary extension of $\kappa^{\flat}$, as shown below:
\[
  \begin{tikzcd}[column sep=4.5em, row sep=2.5em]
    K \arrow[r, "v"]  & k \\
    \kappa \arrow[r, dashed, bend left=10, "\flat"] \arrow[u, "\preceq", no head]   & \kappa^{\flat} \arrow[u, "\preceq", no head]   
     \end{tikzcd}
\]
The valuation $v$ is obtained as  a suitable coarsening of the non-standard valuation on $K$ and has very nice properties; it is henselian defectless with divisible value group. In particular, the valued field $(K,v)$ is \textit{tame}, a property we will exploit later on.
%We may now focus entirely on $K$ since any elementary property will be inherited by $\kappa$.
%This is a non-standard version of the almost purity theorem.
%Specifically, we need the following statement, which slightly extends the main theorem in \textit{loc. cit.}
%The proof crucially depends on our results with Jahnke \cite{JK} as well as the results of \cite{KK4}. 
%By \cite{JK}, 
%an ultrapower of perfectoid field admits a henselian defectless  valuation with divisible value group and residue field which is an elementary extension of $\F_p(\!(t^{1/p^{\infty}})\!)$. 
%The theory of such a valued field is completely determined by the residue field, the value group and the field of algebraic numbers $\overline{\Q}\cap K$, provided that the latter is sufficiently big. 
\subsubsection*{Step 2: Construction of a $C_2$ field}
We now cook up another valued field $(K',v')$ with value group and residue field that are elementarily equivalent to those of $(K,v)$, but which is also $C_2$ by  design. Start with any maximal totally ramified extension $E_{\infty}$ of $\Q_p$. Such a field is $C_1$ by \cite{KK4}. We equip the rational function field $E_{\infty}(t)$ with the Gauss valuation $u$, which has value group $\Q$ and residue field $\F_p(t)$. 
We now take an algebraic extension $(K',v')$ of $(E_{\infty}(t),u)$ which is henselian defectless with value group $\Q$ and residue field $k'=\F_p(t)^{h,\text{perf}}$. This finishes our construction of $(K',v')$. 
We claim that 
$$k\equiv k' \mbox{ and } \Gamma\equiv \Gamma' $$
The statement about the residue fields follows from another result from \cite{JK}, which says that $\F_p(t)^{h,\text{perf}}$ is an elementary substructure of $\F_p(\!(t^{1/p^{\infty}})\!)$. For the value groups, we use the standard fact that any two non-trivial divisible ordered abelian groups have the same theory. 
Finally, note that $K'$ is indeed $C_2$ by Lang's transition theorem, since it has transcendence degree $1$ over the $C_1$ field $E_{\infty}$. 
%is $C_1$ and $K'/E_{\infty}(t)$ is an algebraic extension. 
\subsubsection*{Step 3: Model theory of tame fields}
We now use the model theory of tame fields, due to F.-V. Kuhlmann \cite{Kuhl}, to compare $(K,v)$ and $(K',v')$. This theory does not quite imply that $(K,v)$ and $(K',v')$ are elementarily equivalent, even though their residue fields and value groups are elementarily equivalent. However, it does imply that any given sentence $\varphi$ which holds in $K'$ will also hold in $K$, provided that the relative algebraic closure of $\Q_p$ in $K$ contains a sufficiently large finite subextension $E/\Q_p$ of $E_{\infty}/\Q_p$ (depending on $\varphi$). For any given $d\in \N$, we apply this to the sentence which expresses the $C_2(d)$ property, thereby obtaining that $K$, and hence $\kappa$, satisfies $C_2(d)$.
%our result.
%In particular, given $d\in \N$, there is $E/\Q_p$ such that every untilt of $\F_p(\!(t^{1/p^{\infty}})\!)$ containing $E$ is $C_2(d)$. 
%The proof of Theorem \ref{secondmain} is similar 
\subsection*{Organization of the paper} 
In \S \ref{model}, we present some preliminary results from the model theory of valued fields. In \S \ref{C1}, we recall some facts about geometrically $C_1$ fields and prove Theorems \ref{secondmain} and \ref{conditionperf}. In \S \ref{C2}, we prove Theorem \ref{firstmain} and also a generalization in the context of rationally simply connected varieties.

\section{Model theory of valued fields} \label{model}
We refer the reader to the article by F.-V. Kuhlmann \cite{Kuhl} on the model theory of tame fields and to the one by Jahnke and the author \cite{JK} on the model theory of perfectoid fields.
\subsection{Model theory of tame fields}

\begin{definition}
Let $(K,v)$ be a henselian valued field with value group $\Gamma$ and residue field $k$. A finite valued field extension $(K',v')$ of $(K,v)$, with value group $\Gamma'$ and residue field $k'$, is said to be \textit{tame} if the following are satisfied:
\begin{enumerate}[label=(\roman*)]
\item  If $p=\text{char}(k)>0$, then $p\nmid [\Gamma':\Gamma]$. 
\item The residue field extension $k'/k$ is separable.
\item The extension $(K',v')/(K,v)$ is \textit{defectless}, i.e., 
$$[K':K]=[\Gamma':\Gamma]\cdot [k':k]$$

\end{enumerate}
We say that $(K,v)$ is \textit{tame} if every finite valued field extension of $(K,v)$ is tame. 
%An algebraic valued field extension is said to be tame if every finite subextension is tame.
\end{definition}
%algebra and model theory of tame fields was introduced and studied by  . 
%\begin{definition}
%A valued field $(K,v)$ is \textit{tame} if it is henselian defectless, with perfect residue field and $p$-divisible value group, where $p$ is the characteristic exponent of the residue field. 
%\end{definition}
%The following is an Ax-Kochen/Ershov principle for existential closedness:
%We will need some model-theoretic results due to Kuhlmann \cite{Kuhl}:
%This generalizes the reduction from general henselian valued fields with divisible value group to Puiseux series that was stated in the introduction.
%If $\Oo_K$ is a direct limit of prime regular DVRs, then clearly $(K,v)$ is prime regular. The converse is not true in general, e.g., if $\Gamma_K$ is of rank $>1$. Nevertheless, for tame fields with divisible value group we can prove a kind of converse up to elementary equivalence. The key ingredient is the following:
%Given a valued field $(K,v)$, we write $(K^h,v^h)$ for the henselization, which is unique up to (non-unique) isomorphism.
% The following existential version will be crucial for Theorem \ref{tamemodels}:
%If $M$ is an $L$-structure and $A\subseteq M$ is an arbitrary subset, we write $L(A)$ for the language $L$ enriched with a constant symbol $c_a$ for each element $a \in A$. The $L$-structure $M$ can be updated into an $L(A)$-structure in the obvious way.

Given first-order $L$-structures $M$ and $N$ with a common substructure $A$, we use the notation $M\equiv_A N$ to say that the structures $M$ and $N$ are elementarily equivalent in the language $L$ enriched with constant symbols for each element of $ A$. We write \(L_{\mathrm{rings}}=\{0,1,+,\cdot\}\) for the language of rings, \(L_{\mathrm{oag}}=\{0,+,<\}\) for the language of ordered abelian groups, and
\(L_{\mathrm{val}}=L_{\mathrm{rings}}\cup\{\mathcal O\}\) for the language of valued fields, where $\Oo$ is a unary predicate for the valuation ring. 
\begin{fact}[Theorem 7.1 \cite{Kuhl}] \label{kuhlake}
Let $(K_0,v_0)$ be a defectless valued field and let $(K,v), (K',v')$ be two tame fields extending $(K_0,v_0)$. Assume that $\Gamma/\Gamma_0$ is torsion-free and $k/k_0$ is separable. Then: 
$$(K,v)\equiv_{(K_0,v_0)} (K',v')\mbox{ in }L_{\text{val}}\iff k\equiv_{k_0} k'\mbox{ in }L_{\text{rings}} \mbox{ and } \Gamma\equiv_{\Gamma_0} \Gamma'\mbox{ in }L_{\text{oag}}$$

\end{fact}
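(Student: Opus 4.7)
The plan is to follow the standard Ax--Kochen--Ershov template, deducing elementary equivalence from an embedding argument that exploits the algebraic good behavior of tame fields. The $(\Rightarrow)$ direction is immediate, since $\Gamma$ and $k$ are uniformly interpretable in $(K,v)$ with parameters from $K_0$, so any elementary equivalence over $(K_0,v_0)$ restricts to elementary equivalence of the value groups over $\Gamma_0$ and of the residue fields over $k_0$.

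For the $(\Leftarrow)$ direction, I would pass to sufficiently saturated elementary extensions $(K,v) \preceq (K^{\ast}, v^{\ast})$ and $(K',v') \preceq ((K')^{\ast}, (v')^{\ast})$ of the same large cardinality, and aim to construct a valued field isomorphism $K^{\ast} \to (K')^{\ast}$ fixing $K_0$. The hypotheses, combined with saturation, yield an isomorphism of residue fields over $k_0$ and of value groups over $\Gamma_0$ on the two sides. It then suffices to establish a relative embedding lemma: whenever $(F,v) \subseteq (K^{\ast}, v^{\ast})$ is a tame valued subfield containing $K_0$, of cardinality less than the saturation cardinal, and one is given compatible embeddings of its value group and residue field into those of $((K')^{\ast}, (v')^{\ast})$ over $(\Gamma_0, k_0)$, then these lift to a valued field embedding $(F,v) \hookrightarrow ((K')^{\ast}, (v')^{\ast})$ over $(K_0, v_0)$ inducing the prescribed maps on $\Gamma$ and $k$.

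The embedding lemma is proved by transfinite induction, extending a partial embedding $\varphi \colon (F_0,v) \hookrightarrow ((K')^{\ast}, (v')^{\ast})$ by one element $x \in K^{\ast}$ at a time. Four cases must be handled: (a) $x$ is transcendental over $F_0$ and enlarges the value group, in which case saturation on the target picks out an element with the prescribed value; (b) $x$ is transcendental and enlarges the residue field, handled dually via saturation on the residue side; (c) $x$ is algebraic over $F_0$, in which case tameness of $(K,v)$ supplies the crucial structure (defectlessness, separable residue extension, ramification index coprime to $\mathrm{char}(k)$), so Hensel's lemma together with tameness of $(K',v')$ produces a matching algebraic element on the other side; (d) $x$ generates an immediate transcendental extension, in which case $x$ is a pseudo-limit of a pseudo-Cauchy sequence in $F_0$ without a pseudo-limit in $F_0$, and saturation furnishes a corresponding pseudo-limit in $(K')^{\ast}$.

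The main obstacle is case (d). Here the tameness hypothesis is indispensable: tame fields are algebraically maximal (they admit no proper immediate algebraic extension), which implies via Kaplansky's theory that an immediate transcendental extension generated by a pseudo-limit of a pseudo-Cauchy sequence is uniquely determined, up to valued field isomorphism over the base, by the equivalence type of the sequence. This is the point at which one must appeal to the theory of maximal immediate extensions adapted to the tame setting. The defectlessness of $(K_0,v_0)$, together with the hypotheses that $\Gamma/\Gamma_0$ is torsion-free and $k/k_0$ is separable, ensures that the induction gets off the ground and that the prescribed embeddings on $\Gamma$ and $k$ extend coherently through the construction. Once the embedding lemma is established, iterating it back-and-forth between the two saturated sides yields a valued field isomorphism $K^{\ast} \cong_{K_0} (K')^{\ast}$, whence $(K,v) \equiv_{(K_0,v_0)} (K',v')$.
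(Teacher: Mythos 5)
The paper does not prove this statement: it is quoted verbatim from Kuhlmann's work on tame fields (Theorem 1.4 of \cite{Kuhl}) and used as a black box, so there is no internal proof to compare against. Your sketch follows the standard Ax--Kochen--Ershov template that Kuhlmann himself uses --- saturated models, a relative embedding lemma, Kaplansky-style uniqueness for the immediate transcendental step, with tame (hence algebraically maximal) fields supplying the needed rigidity --- so the architecture is the right one.

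Two points are rougher than a genuine proof would tolerate. First, your embedding lemma quantifies over \emph{tame} valued subfields $(F,v)\subseteq(K^{\ast},v^{\ast})$, but the intermediate fields $F_0$ produced by the element-by-element transfinite induction will in general not be tame, nor even henselian; the actual argument must either restore henselianity at each stage or be reorganized so that the relative algebraic closure of $K_0$ in $K^{\ast}$ is embedded first (this is where defectlessness of $(K_0,v_0)$ and the torsion-freeness and separability hypotheses carry the load), after which only value-transcendental, residue-transcendental and immediate-transcendental steps remain. Second, your case (c) conceals the principal structural content of the tame theory: one must show that a tame algebraic extension is generated by radicals $x$ with $x^e\in\Oo^{\times}$ and $\gcd(e,p)=1$ together with Hensel lifts of separable residue extensions, and then match these generators across the two sides against the prescribed maps on $\Gamma$ and $k$; ``Hensel's lemma together with tameness produces a matching element'' names the goal but not the mechanism. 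These are omissions of detail rather than a wrong strategy; your identification of algebraic maximality and pseudo-Cauchy sequences of transcendental type as the engine of case (d) is exactly right.
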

In equal characteristic, one can take $K_0$ to be the prime field (either $\Q$ or $\F_p$) equipped with the trivial valuation. This yields the following:
\begin{fact} [Theorem 1.4 \cite{Kuhl}] \label{akekuhl}
Let $(K,v)$ and $(K',v')$ be two equal characteristic tame fields. Then: 
$$(K,v)\equiv (K',v')\mbox{ in }L_{\text{val}}\iff k\equiv k'\mbox{ in }L_{\text{rings}} \mbox{ and } \Gamma\equiv \Gamma'\mbox{ in }L_{\text{oag}}$$
\end{fact}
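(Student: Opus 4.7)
The plan is to deduce Fact \ref{akekuhl} directly from the relative version Fact \ref{kuhlake} by specializing $(K_0, v_0)$ to be the common prime field of $K$ and $K'$, equipped with the trivial valuation. So the first step is to verify that both $(K,v)$ and $(K',v')$ legitimately extend $(K_0, v_0)$. In equal characteristic, the valuation $v$ restricted to the prime field must be trivial: if $\text{char}(K)=0$ and $v|_\Q$ were a $p$-adic valuation, then $p$ would lie in the maximal ideal, forcing $\text{char}(k)=p$, contradicting equal characteristic; if $\text{char}(K)=p$, any valuation on $\F_p$ is trivial because $\F_p^\times$ is a torsion group. So this setup is well defined.

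Next, I would verify the three hypotheses of Fact \ref{kuhlake}. For defectlessness of $(K_0, v_0)$: any finite algebraic extension $K_0'/K_0$ carries only the trivial valuation extending $v_0$ (for $\F_p$, every algebraic extension is a union of finite fields with trivial valuation; for $\Q$, a nontrivial valuation on a number field restricts to a nontrivial valuation on $\Q$ by Ostrowski), hence $\Gamma_0' = 0$, $k_0' = K_0'$, and the fundamental equality $[K_0':K_0] = [\Gamma_0':\Gamma_0]\cdot[k_0':k_0]$ holds trivially. The torsion-freeness of $\Gamma/\Gamma_0 = \Gamma$ is automatic since every ordered abelian group is torsion-free. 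Finally, $k/k_0$ is separable because the prime field $k_0$ is perfect.

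With these hypotheses in place, Fact \ref{kuhlake} yields
\[
(K,v) \equiv_{(K_0,v_0)} (K',v') \iff k \equiv_{k_0} k' \text{ and } \Gamma \equiv_{\Gamma_0} \Gamma'.
\]
Since $(K_0, v_0)$ is the prime structure and the distinguished constants for $K_0$, $k_0$, $\Gamma_0$ are all definable in the respective languages without parameters, the relative elementary equivalences collapse to plain elementary equivalence, giving the ($\Leftarrow$) direction. The ($\Rightarrow$) direction is immediate: both the value group (as the quotient of $K^\times$ by units of the valuation ring) and the residue field (as the valuation ring modulo its maximal ideal) are interpretable in $L_{\text{val}}$, so elementary equivalence of the valued fields passes to each.

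The only even mildly subtle point is the verification that $(K_0, v_0)$ is defectless, and this is a short argument about the structure of valuations on prime fields. All the real content sits in Fact \ref{kuhlake}, whose proof (Kuhlmann's) proceeds by a back-and-forth argument between saturated tame extensions leveraging the relative embedding property; I would not redo that here.
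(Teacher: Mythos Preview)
Your proposal is correct and follows exactly the approach the paper indicates: the paper merely remarks that in equal characteristic one takes $K_0$ to be the prime field with the trivial valuation and applies Fact~\ref{kuhlake}, and you have carefully verified the hypotheses (defectlessness of the trivially valued prime field via integral closedness of valuation rings, torsion-freeness of $\Gamma/\{0\}$, separability over a perfect $k_0$) that the paper leaves implicit.
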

Beware that the analogous statement fails in mixed characteristic. Indeed, there are numerous examples of mixed characteristic tame fields $(K,v)$ and $(K',v')$ with $k= k'$ and $\Gamma = \Gamma'$, yet their algebraic parts over $\Q$ are non-isomorphic, implying that they are not elementarily equivalent; see \cite{KuhlAns}. 
%This is consistent with Fact \ref{kuhlake}, as no appropriate choice of  $(K_0,v_0)$ is possible in this case. 
Upcoming work of Ketelsen provides more nuanced counterexamples where also the algebraic parts of $K$ and $K'$ are isomorphic. Nevertheless, a compactness argument gives the following more refined statement, which plays a central role throughout:
\bp \label{tamecor}
Let $\varphi$ be a sentence in the language of valued fields and $E_{\infty}$ be a maximal totally ramified extension of  $\Q_p$. Then there exists a finite subextension $E/\Q_p$ of $E_{\infty}/\Q_p$ such that for any two tame fields $(K,v)$ and $(K',v')$ extending $(E,v_p)$ with $k\equiv k'$ in $L_{\text{rings}}$ and $(\Gamma,vp)\equiv (\Gamma',v'p)$ in $L_{\text{oag}}$, we have that 
$$\varphi\mbox{ holds in }(K,v) \iff \varphi \mbox{ holds in }(K',v') $$
\ep 
\begin{proof}
Suppose otherwise. 
%Let $K_0$ be a maximal totally ramified extension of $\Q_p$, equipped with the unique extension $v_0$ of the $p$-adic valuation. 
%Note that $(K_0,v_0)$ is henselian defectless with value group $\Gamma_0=\Q$. 
Write $E_{\infty}=\bigcup_{i\in \N} E_i$, where 
$$E_1\subseteq E_2 \subseteq ...$$ 
is an increasing chain of finite totally ramified extensions of $\Q_p$. By assumption, for each $E_i/\Q_p$ there exist tame valued extensions $(K_i,v_i)$ and $(K_i',v_i')$ of $E_i$ with residue fields $k_i,k_i'$ and value groups $\Gamma_i,\Gamma_i'$ such that
\[
  k_i \equiv k_i' \quad\text{and}\quad (\Gamma_i, v_i p) \equiv (\Gamma_i', v_i' p),
\]
and $\varphi$ holds in $(K_i,v_i)$ but not in $(K_i',v_i')$. Let $U$ be a non-principal ultrafilter on $\N$ and consider the ultraproducts 
$$(K,v)=\prod_{i\in \N} (K_i,v_i)/U \mbox{ and }(K',v')=\prod_{i\in \N} (K_i',v_i')/U $$
Since tame fields form an elementary class (see \S 7.1 \cite{Kuhl}),  both $(K,v)$ and $(K',v')$ are tame. Moreover, they both extend $(E_{\infty},v_p)$ and we have $k\equiv k'$ and $(\Gamma,vp)\equiv (\Gamma',v'p)$ by \L o\'s. Note that $(E_{\infty},v_p)$ is defectless, since every finite extension comes entirely from the residue field, and $\Gamma/\Q$ is clearly torsion-free. By Fact \ref{kuhlake}, we get that 
$$(K,v)\equiv_{(E_{\infty},v_p)} (K',v')$$ 
which is contrary to the fact that $\varphi$ holds in $(K,v)$ but not in $(K',v')$.
%to a non-principal ultraproduct 
%Let $E$ be a maximal
\end{proof}
%We will only need the following statement for $\Gamma=\Gamma_0$
%to construct tame fields with prescribed value
%group and residue field: 
%The following two results are probably well-known but we could not find a reference:
%For lack of reference, we also record the following observation:
The following two lemmas are  well-known to experts but we include proofs for completeness.
\bl [Theorem 2.14 \cite{KuhlVal}]\label{construction1}
Let $(K_0,v_0)$ be a valued field with value group $\Gamma_0$ and residue field $k_0$. Let $\Gamma$ be an ordered abelian group extending $\Gamma_0$ such that $\Gamma/\Gamma_0$ is a torsion group and $k/k_0$ be an algebraic extension. Then there exists an algebraic extension $(K,v)$ of $(K_0,v_0)$ with value group $\Gamma$ and residue field $k$.
\el 
\begin{proof}
%Write $k=\varinjlim k_i$ where each $k_i/k$ is a finite extension of $k_0$. 
We begin by extending the residue field to $k$. Suppose first that $k/k_0$ is a finite extension with minimal polynomial $f(X)\in k_0[X]$. Let $F(X)\in K_0[X]$ be a monic lift and $a$ be a root of $F(X)$ in $\overline{K_0}$. By Lemma 3.21 \cite{vdd}, the valuation $v_0$ extends uniquely to $K_0(a)$, with value group $\Gamma_0$ and residue field $k$. The case of an arbitrary algebraic extension follows by transfinite recursion, using the previous construction for the successor step and taking unions for the limit step.

We now extend the value group to $\Gamma$. Suppose that $\gamma\in \Gamma \backslash \Gamma_0$ is such that $p\cdot \gamma \in \Gamma_0$ for some prime $p$. Let $b_0\in K_0$ be such that $v_0b_0=p\cdot \gamma_0$ and $b \in \overline{K_0}$ be such that $b^p=b_0$. By Lemma 5.6 \cite{vdd}, the valuation $v_0$ extends uniquely to $K_0(b)$, with value group $\Gamma_0+\Z \gamma$ and residue field $k$. We can write $\Gamma=\bigcup_{i \in \kappa} \Gamma_i$, where $\kappa$ is an ordinal and 
$$\Gamma_0\subseteq \Gamma_1 \subseteq ... \subseteq \Gamma_i\subseteq \Gamma_{i+1} \subseteq ...$$
is an increasing chain of ordered abelian groups such that $\Gamma_{i+1}=\Gamma_i+\Z \gamma_i $ with $\gamma_i\in \Gamma_{i+1}\backslash \Gamma_i$ and $p\cdot \gamma_i\in \Gamma_i$ for some prime $p$. We again apply transfinite recursion, using the previous construction for the successor step, to construct an algebraic extension $(K,v)$ of $(K_0,v_0)$ with value group $\Gamma$ and residue field $k$.
%$b\in \overline{K_0}$ such that $b^p \in K_0$.
%construct an algebraic extension of $(K_0,v_0)$ with value group $\Gamma$ and residue field $k$. 
%Let $\gamma \in \Gamma \backslash \Gamma_0$ be a $p$-torsion element for some prime. 
\end{proof}

%, which we will only need in the special case where $\Gamma=\Gamma_0$:
\bl \label{construction2}
Let $(K_0,v_0)$ be a valued field with value group $\Gamma_0$ and residue field $k_0$ of characteristic $p$. Let $\Gamma$ be a $p$-divisible ordered abelian group such that $\Gamma/\Gamma_0$ is a torsion group and $k$ be a perfect field such that $k/k_0$ is algebraic. Then there exists an algebraic extension $(K,v)/(K_0,v_0)$ such that $(K,v)$ is tame with value group $\Gamma$ and residue field $k$.
%algebraic extension $(K,v)/(K_0,v_0)$ such that $(K,v)$ is tame with value group $\Gamma=\Gamma_0$ and residue field $k$.
\el 
\begin{proof}
%We can assume that $(K_0,v_0)$ is henselian. 
%We can assume that the residue characteristic $p$ is positive.
By Lemma \ref{construction1}, there is an algebraic extension $(K_1,v_1)$ of $(K_0,v_0)$ with value group $\Gamma$ and residue field $k$. Let $(K,v)$ be a maximal immediate algebraic extension of $(K_1,v_1)$. By Theorem 3.2 \cite{Kuhl}, we get that $(K,v)$ is tame and has all the desired properties. 
%By Lemma 3.21 \cite{vdd}, we can find an algebraic valued field extension with residue field $k$.
%Let $\tau=\{t_i:i\in I \}$ be a $p$-basis of $\kappa$ and $\text{T}=\{T_i:i\in I\}$ be a set of lifts in $K_0$. Let $K_0(\text{T}^{1/p^{\infty}})$ with residue field $k_1=k_0^{1/p^{\infty}}$. Now lift the separable extension $k/k_1$ to an unramified extension of $(K_1,v_1)$. Finally, let $(K,v)$ be an maximal immediate algebraic extension of $(K_1,v_1)$. By Theorem 3.2 \cite{Kuhl}, we see that $(K,v)$ is tame and has all the desired properties.
\end{proof}

\subsection{Model theory of perfectoid fields}
We refer the reader to \S 3 \cite{Scholze} for basic facts about perfectoid fields and tilting. 
\begin{definition}
A \textit{perfectoid field} is a complete valued field $(K,v)$ of residue characteristic $p>0$ such that the value group is a dense subgroup of $\mathbb{R}$ and the Frobenius map $\varphi : \mathcal{O}_K/(p) \to \mathcal{O}_K/(p):x\mapsto x^p$ is surjective.
\end{definition}
We can transform any perfectoid field $K$ to a perfectoid field $K^{\flat}$ of characteristic $p$, called the \textit{tilt} of $K$. As a set, we have $K^{\flat}=\{ (x_n)_{n\in \omega} \in K^{\omega}: x_{n+1}^p=x_n\}$. We define  multiplication in $K^{\flat}$ coordinatewise 
$$(x_n)_{n\in \omega} \cdot (y_n)_{n\in \omega}=(x_n\cdot y_n)_{n\in \omega} $$ 
%The monoid $\varprojlim_{x\mapsto x^p}K$ will be the underlying multiplicative monoid of $K^{\flat}$. 
%Similarly, one can define the multiplicative monoid $ \varprojlim _{x\mapsto x^p} \Oo_K$. 
Addition in $K^{\flat}$ is a bit more involved and is described  by the following rule
$$ (x_n)_{n\in \omega} + (y_n)_{n\in \omega} = (z_n)_{n\in \omega} \mbox{ where }z_n=\lim_{m\to \infty} (x_{n+m} + y_{n+m})^{p^m}$$ 
%One indeed checks that $(x_{n+m}^{p^m} + y_{n+m}^{p^m})_{m\in \omega}$ is a Cauchy sequence and hence the limit exists in $K$.
%One checks that $K^{\flat}$ equipped with the above operations is a field of characteristic $p$.
% called the \textit{tilt} of $K$.
We have the \textit{sharp} map from $K^{\flat}$ to $K$, which simply picks out $x_0$, namely
$$\sharp: K^{\flat}\to K:(x_n)_{n\in \omega} \mapsto x_0 $$
It is clearly a multiplicative morphism. We define a valuation $v^{\flat}$ on $K^{\flat}$ by 
$$v^{\flat}(x)=v(x^{\sharp})$$ 
%, at least after passing to an elementary extension:
\begin{fact} [Theorem 6.2.3 \cite{JK}]\label{JKimproved}
Let $(K,v)$ be a perfectoid field and $\varpi\in \mathfrak{m}_v\backslash \{0\}$. Let $U$ be a non-principal ultrafilter on $\N$ and $(K_U,v_U)$ be the corresponding ultrapower. Let $S\subseteq \Oo_{v_U}$ be the set of elements with infinitesimal valuation, i.e., smaller than any positive rational multiple of $v_U\varpi$, and $\Oo_w=S^{-1}\Oo_{v_U}$. Then: 
\begin{enumerate}[label=(\Roman*)]
\item The valued field $(K_U,w)$ is tame with divisible value group. 
%unramified.

\item The tilt $(K^{\flat},v^{\flat})$ embeds elementarily into $(k_w,\overline{v})$, where $\overline{v}$ is the induced valuation of $v_U$ on $k_w$.
\end{enumerate} 
\end{fact}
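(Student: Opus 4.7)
The plan is to prove (I) first, then leverage it together with Kuhlmann's relative Ax--Kochen--Ershov theorem (Fact \ref{kuhlake}) to obtain (II). The passage $(K,v) \rightsquigarrow (K_U,v_U) \rightsquigarrow (K_U,w)$ flattens the perfectoid field $K$ into a tame valued field, while recording its entire content --- in particular the tilt $K^{\flat}$ --- inside the residue valued field $(k_w,\overline{v})$.

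For Part (I), I would verify the defining properties of a tame field in turn. Henselianness of $(K,v)$, true of any perfectoid field, is preserved under ultrapowers by \L{}o\'s and under coarsenings automatically. Divisibility of $\Gamma_w = \Gamma_{v_U}/\Delta$ follows from density of $\Gamma_v$ in $\mathbb{R}$: for each $n$, the sentence \emph{``for all $\beta$ and positive $\epsilon$ there exists $\gamma$ with $|n\gamma - \beta| < \epsilon$''} holds in $\Gamma_v$, hence in $\Gamma_{v_U}$ by \L{}o\'s; applied with $\epsilon$ any positive element of $\Delta$, it yields $n$-divisibility modulo $\Delta$. Perfectness of $k_w$ (which has residue characteristic $p$ since $w(p) > 0$) comes from Frobenius surjectivity on $\Oo_v/(p)$ transferred to $\Oo_{v_U}/(p)$ by \L{}o\'s and then reduced along $(p) \subseteq \mathfrak{m}_w$. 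For defectlessness I would appeal to the theorem (going back to Gabber--Ramero and reproved by Scholze in the perfectoid setting) that perfectoid fields are themselves tame; tameness transfers to $(K_U,v_U)$ since tame fields form an elementary class, and the standard decomposition of tameness along a coarsening then yields tameness of both $(K_U,w)$ and $(k_w,\overline{v})$.

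For Part (II), define $\theta \colon K^{\flat} \to K_U$ by sending $x = (x_n)_n \in K^{\flat}$ to the ultrapower class of the sequence $n \mapsto x_n$, then postcompose with reduction modulo $\mathfrak{m}_w$. Multiplicativity is coordinate-wise and immediate; additivity follows from the defining formula $z_n = \lim_{m\to\infty}(x_{n+m}^{p^m} + y_{n+m}^{p^m})$ together with the perfectoid approximation $x_{n+m}^{p^m} \to x_n$, which forces the discrepancy between $\theta(x+y)$ and $\theta(x)+\theta(y)$ to land in $\mathfrak{m}_w$ inside the ultrapower. Since $v(x_n) = v(x_0)/p^n \to 0$, $\theta(x)$ has infinitesimal $v_U$-valuation and reduces to a nonzero element of $k_w$ whenever $x \neq 0$, with $\overline{v}(\theta(x)) = v^{\flat}(x)$ under the natural identification of $\Gamma_{v^{\flat}}$ with its image in $\Delta$ via $\gamma \mapsto [n \mapsto \gamma/p^n]$. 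To promote this to an elementary embedding, apply Fact \ref{kuhlake} with defectless base $(K^{\flat}, v^{\flat})$: both $(K^{\flat}, v^{\flat})$ (tame as a characteristic $p$ perfectoid field) and $(k_w, \overline{v})$ (tame by (I) and the coarsening decomposition) are equal-characteristic tame fields; the residue field of $(k_w, \overline{v})$ is the ultrapower of the residue field of $v^{\flat}$, hence an elementary extension; and the value group extension $\Gamma_{v^{\flat}} \hookrightarrow \Delta$ is elementary by the model theory of $p$-divisible ordered abelian groups.

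The hardest steps, I expect, are the defectlessness in Part (I) --- requiring either the nontrivial theorem that perfectoid fields are tame or a direct argument exploiting the approximation properties of the perfectoid structure --- and the elementariness of the value group embedding in Part (II), for which $p$-divisibility and density are not enough on their own and one needs a careful OAG-theoretic verification that $\Gamma_{v^{\flat}}$ and $\Delta$ share the same divisibility profile and that the embedding preserves the relevant first-order content.
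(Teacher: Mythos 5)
The paper does not prove this statement: it is imported as a black box from \cite{JK} (Theorem 6.2.3), so there is no internal proof to compare against. Evaluating your sketch on its own merits, the construction of the embedding in Part (II) via $\theta \colon x \mapsto [n \mapsto x_n]$ followed by reduction modulo $\mathfrak{m}_w$, and the verifications of multiplicativity and of additivity modulo $\mathfrak{m}_w$, are sound in spirit.

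However, Part (I) rests on a false premise. Perfectoid fields are \emph{not} tame in Kuhlmann's sense, because they are not defectless. For instance, $\F_p(\!(t^{1/p^{\infty}})\!)$ admits the Artin--Schreier extension $L = K(x)$ with $x^p - x = t^{-1}$; here $v(x) = -1/p$ already lies in the $p$-divisible value group and the residue extension is trivial, so $L/K$ is a degree-$p$ immediate extension with defect $p$. (The pseudo-limit $\sum_{j\geq 1} t^{-1/p^j}$ fails to converge $t$-adically, so the Artin--Schreier polynomial is genuinely irreducible.) By the tilting correspondence the analogous defect extension appears in mixed characteristic as well, e.g.\ for $\Q_p(\mu_{p^\infty})^\wedge$. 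Gabber--Ramero's ``deeply ramified'' is not the same as ``defectless,'' and the almost-purity theorem does not give you defectlessness of $(K,v)$. Consequently tameness does \emph{not} transfer from $(K,v)$ to $(K_U,v_U)$, and your ``coarsening decomposition'' has nothing to decompose. The correct route --- and, as far as I can tell, the one taken in \cite{JK} --- is to prove algebraic maximality of $(K_U,w)$ \emph{directly} from $\aleph_1$-saturation of the ultrapower: any countable pseudo-Cauchy sequence with respect to $w$ gives rise to a finitely satisfiable countable type over $(K_U,v_U)$ (using that the defining conditions $w(a-a_k) \geq \gamma_k$ can be approximated by first-order conditions $v_U(a-a_k) > \tilde\gamma_k - v_U\varpi/m$), and since $\Gamma_w^{>0}$ has countable coinitiality this yields a pseudo-limit. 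The whole point of passing to the coarsening is that the defect of the perfectoid ultrapower is pushed entirely into the ``fine'' valued residue field $(k_w,\overline{v})$, leaving $(K_U,w)$ defectless.

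The other place you flag as hard --- elementariness of $\Gamma_{v^\flat} \hookrightarrow \Delta$ --- is indeed a genuine verification that your sketch does not complete. One needs: (a) purity of the embedding $\gamma \mapsto [n\mapsto \gamma/p^n]$ (which works out, using convexity of $\Delta$ in $\Gamma_{v_U}$); (b) that $\Delta$ is a dense regular OAG with the same Szmielew invariants as $\Gamma_{v^\flat}$ (density of $\Gamma_v$ in $\R$ plus \L{}o\'s plus convexity); and (c) model-completeness of the theory of a dense regular OAG in the language expanded by the divisibility predicates $D_n$ (Robinson--Zakon). As written, ``the model theory of $p$-divisible ordered abelian groups'' is too vague to carry the claim, and in particular $p$-divisibility alone is far from sufficient since $\Delta$ need not be divisible at the other primes.
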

%The idea is that perfectoid fields are \quotes{almost tame}, namely they become tame after passing to an ultrapower and slightly localizing the valuation ring.  This is closely related to the almost purity theorem in perfectoid geoemtry, see \textit{loc.cit}. 
%Given a finite extension $E/\Q_p$ and a perfectoid field $F$ of characteristic $p$, Fargues-Fontaine \cite{FF} construct a one dimensional noetherian regular $E$-scheme whose closed points are in bijection with untilts of $F$ containing $E$.
Let $F$ be a perfectoid field of characteristic $p$. An \textit{untilt} of $F$ is a pair $(K,\iota)$, where $K$ is a perfectoid field and $\iota: K^{\flat} \to F$ is an isomorphism. Fargues-Fontaine \cite{FF} construct a one dimensional noetherian regular $\Q_p$-scheme  $X^{\text{FF}}$ (but not of finite type over $\Q_p$) such that the closed points of $X^{\text{FF}}$ are in natural bijection with the set of untilts of $F$ modulo an action of Frobenius. More generally, for any finite extension $E/\Q_p$ whose residue field is contained in the residue field of $F$, Fargues-Fontaine define $X^{\text{FF}}_E$ which parametrizes untilts of $F$ containing $E$. 

We show that the untilts of $F$ approach the same theory as $E$ become large:
\bp \label{perfprop}
Let $\varphi$ be a sentence in the language of rings. Let $F$ be a perfectoid field of characteristic $p$ and $E_{\infty}$ be a maximal totally ramified extension of $\Q_p$. Then there exists a finite subextension $E/\Q_p$ of $E_{\infty}/\Q_p$ such that for every untilt $K$ of $F$ containing $E$, we have that 
$$\varphi\mbox{ holds in }K \iff \varphi \mbox{ holds in }K' $$
for any tame field $K'$ extending $E$ with divisible value group and residue field elementarily equivalent to $F$.
\ep 
\begin{proof}
By Proposition \ref{tamecor}, there is a finite totally ramified extension $E/\Q_p$ such that any two tame valued fields extending $(E,v_p)$ with divisible value
group and residue field elementarily equivalent to $F$ agree on the truth
value of $\varphi$. Now let $K$ be an untilt of $F$ containing $E$. Let $U$ be a non-principal ultrafilter on $\N$ and $(K_U,w)$ be as in Fact \ref{JKimproved}. Then $(K_U,w)$ is tame with divisible value group, its residue field is elementarily equivalent to $F$, and it extends $(E,v_p)$, so by Proposition~2.5 we have
\[
  \varphi \text{ holds in } K_U \quad\Longleftrightarrow\quad
  \varphi \text{ holds in } K'
\]
for any tame field $K'$ extending $E$ with divisible value group and residue
field elementarily equivalent to $F$. On the other hand, $K_U$ is an ultrapower of $K$ as a pure field, so
\[
  \varphi \text{ holds in } K \quad\Longleftrightarrow\quad
  \varphi \text{ holds in } K_U.
\]
Combining the two equivalences gives the desired statement.
\end{proof}

\begin{rem}

\begin{enumerate}[label=(\roman*)]
\item In particular, for any untilts $K$ and $K'$ of $F$ containing $E$ (depending on $\varphi$), we have that 
$$\varphi\mbox{ holds in }K \iff \varphi \mbox{ holds in }K' $$
\item  We note that, in general, the untilts of $F$ are not elementarily equivalent to each other. For example, there are continuum many untilts of $\F_p(\!(t^{1/p^{\infty}})\!)$ with non-isomorphic algebraic parts (see Proposition 3.6.9 \cite{KK1}).
\end{enumerate}
\end{rem}

The following result will also be important: 
\begin{fact} [Corollary 5.2.2 \cite{JK}] \label{perfectJK}
Let $k$ be a perfect field of characteristic $p$. The perfect hull of $(k(t)^h,v_t)$ is an elementary substructure of $(k(\!(t^{1/p^{\infty}})\!),v_t)$.
\end{fact}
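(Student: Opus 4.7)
The plan is to prove $F \preceq K$ in $L_\mathrm{rings}$ (where $F := k(t)^{h,\mathrm{perf}}$ and $K := k(\!(t^{1/p^\infty})\!)$) by combining the ultrapower coarsening construction of Fact \ref{JKimproved} with Kuhlmann's AKE for tame fields (Facts \ref{kuhlake}, \ref{akekuhl}). First, the canonical valued-field inclusion $k(t)^h \hookrightarrow k(\!(t)\!)$ extends via perfect hulls and $t$-adic density to make $F \subseteq K$ an immediate extension of henselian valued fields, with common value group $\Z[1/p]$ and residue field $k$. Crucially, neither $F$ nor $K$ is tame in Kuhlmann's sense: both admit immediate degree-$p$ Artin--Schreier defect extensions (e.g., $x^p - x = 1/t$, whose putative root would have value $-1/p \in \Z[1/p]$ and trivial residue), so Fact \ref{kuhlake} cannot be applied to the pair $(F,K)$ directly.

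To sidestep this, I would embed everything into a tame ambient via Fact \ref{JKimproved}. Since $K$ is a perfectoid field of characteristic $p$ (so $K^\flat = K$), that Fact produces a non-principal ultrapower $K_U$ and a coarsening $w$ of $v_U$ such that $(K_U, w)$ is tame with divisible value group, together with an elementary embedding $\sigma : K \hookrightarrow k_w$ of fields. Composing with the inclusion $F \hookrightarrow K$ yields $\sigma|_F : F \hookrightarrow k_w$. The problem then reduces to showing that $\sigma|_F$ is also elementary, since Tarski--Vaught applied to the chain $\sigma(F) \subseteq \sigma(K) \subseteq k_w$ with both sub-embeddings elementary will then give $\sigma(F) \preceq \sigma(K)$, that is, $F \preceq K$.

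To see $\sigma|_F$ is elementary, I would work inside the tame field $k_w$: apply Lemma \ref{construction2} to realize a tame algebraic extension $\tilde F$ of $\sigma(F)$ inside $k_w$ with value group $\Z[1/p]$ and residue field $k$. Fact \ref{kuhlake} applied over a common small defectless tame base (produced by Lemma \ref{construction1}) then yields $\tilde F \preceq k_w$, using that the value-group extension to $k_w$ can be arranged torsion-free over $\Z[1/p]$ and the residue-field extension separable. The remaining step is to pass from $\tilde F \preceq k_w$ to $\sigma(F) \preceq k_w$, i.e., to establish $F \preceq \tilde F$; here I would exploit that $F$ is perfect with $p$-divisible value group, so the intermediate extensions are either tamely ramified (prime-to-$p$ degree, and thus already witnessed at the level of the rational value group) or Artin--Schreier towers, both of which are controlled in first-order logic over $F$.

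The main obstacle is precisely this last passage — justifying $F \preceq \tilde F$ despite $F$ not being tame. Perfectness of $F$ and $p$-divisibility of $\Z[1/p]$ are essential, since the analogous statement without perfection fails ($k(t)^h$ is not an elementary substructure of $k(\!(t)\!)$ in positive characteristic in general). The heuristic is that any first-order discrepancy between $F$ and $\tilde F$ in $L_\mathrm{rings}$ would have to come from an immediate algebraic extension, which in our setting is Artin--Schreier-type; but the (non)solvability of such equations is already a first-order assertion about $F$ that transfers faithfully to any elementary extension.
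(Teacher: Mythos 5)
The statement you are trying to prove is imported into the paper as a cited fact (Corollary~5.2.2 of \cite{JK}); the paper itself gives no proof, so I can only assess your attempt on its own terms. Your setup is sensible: you correctly observe that $F = k(t)^{h,\mathrm{perf}}$ and $K = k(\!(t^{1/p^\infty})\!)$ form an immediate extension with value group $\Z[1/p]$ and residue field $k$, that neither is tame (both admit immediate Artin--Schreier defect extensions), and that one can try to route through the tame coarsened ultrapower of Fact~\ref{JKimproved} together with Tarski--Vaught. But the argument has a genuine gap exactly where you flag it: the assertion $F \preceq \tilde F$ for a maximal immediate algebraic extension $\tilde F$ of $\sigma(F)$ inside $k_w$. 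The closing ``heuristic'' does not carry this step. Elementary substructure is not reducible to preservation of solvability of individual Artin--Schreier equations; you would need to show that every formula with parameters from $F$ that is realized in $\tilde F$ is already realized in $F$, and $\tilde F/F$ is in general a transfinite tower of immediate defect extensions. Establishing an elementary extension across such a tower in positive residue characteristic is precisely the kind of Ax--Kochen/Ershov statement that fails without strong hypotheses (tameness, separable tameness, extremality, or the like); perfectness of $F$ plus $p$-divisibility of the value group are not known to suffice on their own, and if they did, you would not have needed the detour through $k_w$ at all --- you could have argued $F \preceq K$ directly, since $K/F$ is also an immediate extension with exactly those features. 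In short, the reduction restates the original difficulty rather than resolving it.

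Two smaller points. First, Lemma~\ref{construction2} produces \emph{some} tame algebraic extension with the prescribed value group and residue field, but gives no embedding of it into $k_w$; to stay inside $k_w$ you should instead take $\tilde F$ to be the relative algebraic closure of $\sigma(F)$ in $k_w$, which is tame because $k_w$ is tame. That is fixable. Second, your appeal to Fact~\ref{kuhlake} to conclude $\tilde F \preceq k_w$ needs a common defectless base with torsion-free value-group quotient and separable residue extension; producing this via Lemma~\ref{construction1} is plausible but needs to be written out. Neither of these repairs, however, touches the essential gap, which is $F \preceq \tilde F$. Whatever the actual proof of Corollary~5.2.2 in \cite{JK} is (it is not reproduced in this paper), it almost certainly does not go through an unsupported claim of this generality.
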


\section{Perfectoid $C_1$ transfer} \label{C1}
After recalling some definitions and facts about rationally connected varieties, we prove our main theorem about untilts of $\overline{\F}_p(\!(t^{1/p^{\infty}})\!)$. We then present some more general results which are conditional on Conjecture \ref{kollcoll}.

\subsection{Geometrically $C_1$ fields } \label{geomc1}

\begin{definition}[ IV, \S 3 \cite{kollarbook}]
%[Koll\'ar-Miyaoka-Mori]
%[Definition 3.2.3 \cite{kollarbook}]
%Koll\'ar-Miyaoka-Mori]
%\begin{enumerate}[label=(\roman*)]
%\item Let $k$ be an algebraically closed field. 
A geometrically irreducible $k$-variety $X$ is called \textit{rationally connected} (resp. \textit{separably rationally connected}) if there is a $\overline{k}$-variety $B$ and a morphism $F:B\times \mathbb{P}^1\to X$ such that  the induced morphism
$$B\times \mathbb{P}^1\times \mathbb{P}^1 \to X\times X: (b,t,t')\mapsto (F(b,t),F(b,t'))$$
is dominant (resp. dominant and separable). 
%\item Let $k$ be an arbitrary field. A $k$-variety $X$ is called rationally connected if $X_{\overline{k}}$ is rationally connected. 
%\end{enumerate}
\end{definition}
In other words, there is an algebraic family of proper rational curves such that for almost any $(x,x')\in X\times X$, there is a curve in the family joining $x$ and $x'$. 
\begin{definition}
\begin{enumerate}[label=(\roman*)]
\item A field $k$ is called \textit{geometrically }$C_1$ if every smooth projective separably rationally connected $k$-variety has a $k$-rational point. 

\item A field $k$ is \textit{geometrically }$C_1$\textit{ up to dimension $d$ }if every smooth projective separably rationally connected $k$-variety of dimension $\leq d$ has a $k$-rational point. 

\end{enumerate}
\end{definition}
The term \quotes{geometrically $C_1$} is due to Koll\'ar, see \cite{hogadi}. Recall from \cite{KK4} that geometrically $C_1$ fields form an elementary class. We now show that for the class of geometrically $C_1$ up to dimension $d$, a single sentence suffices, at least in characteristic $0$. 
%(modulo the theory of characteristic $0$ fields).
This essentially follows from the work of 
%Duesler-Knecht \cite{duesler} and 
Pieropan \cite{pieropan}, as we explain below.
% but since the statement does not appear explicitly there, we spell out the details. 
%Boundedness of terminal Fano varieties  assures that there are only finitely
%many deformation families of terminal Fano varieties in each fixed dimension over
%algebraically closed fields of characteristic $0$:
The main theorem relies crucially on results from the minimal model program \cite{MMP}.
%ultimately relying on \cite{MMP}:
\begin{fact}[Theorem 1.3 \cite{pieropan}] \label{pierofact}
Let $k$ be a field of characteristic $0$. For every $d\in \N$, the
following statements are equivalent:
\begin{enumerate}[label=(\roman*)]
\item $k$ is geometrically $C_1$ up to dimension $d$.
% smooth projective rationally connected $k$-variety of dimension $\leq d$ has a $k$-point.
\item Every terminal Fano variety of dimension $\leq d$
% and Picard rank $1$ 
over $k$ has a $k$-point.
\end{enumerate}
\end{fact}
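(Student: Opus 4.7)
My plan is to prove both implications via the characteristic-zero minimal model program, together with the birational invariance of rational connectedness and Nishimura's lemma for transferring rational points across birational maps between proper $k$-varieties.

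For the direction (i) $\Rightarrow$ (ii), which is the easier one, I start with a terminal Fano $k$-variety $Y$ of dimension $\leq d$. The theorem of Koll\'ar--Miyaoka--Mori, extended by Zhang to the terminal case, shows that $Y$ is rationally connected. I would apply Hironaka's resolution of singularities to obtain a smooth projective model $\tilde Y \to Y$, which is again (separably) rationally connected by the birational invariance of this property in characteristic $0$. Then (i) supplies a $k$-point on $\tilde Y$, which pushes down through the resolution morphism to a $k$-point on $Y$.

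For (ii) $\Rightarrow$ (i), I would argue by induction on $d$. Let $X$ be a smooth projective separably rationally connected $k$-variety of dimension $\leq d$. Since $X$ is uniruled, $K_X$ is not pseudo-effective, and the $K_X$-MMP terminates in a Mori fiber space $\varphi\colon X' \to B$ with $X'$ being $\mathbb{Q}$-factorial terminal and birational to $X$. If $\dim B = 0$, then $X'$ is itself a terminal Fano of dimension $\leq d$, and (ii) produces a $k$-point directly. If $\dim B \geq 1$, then $B$ is rationally connected of strictly smaller dimension, and by the inductive hypothesis (applied to a smooth projective model of $B$) there is a $k$-point $b \in B(k)$; a further application of (ii) or of the inductive statement to the fiber $X'_b$ then yields a $k$-point on $X'$. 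Finally I would descend from $X'$ back to $X$ via Nishimura's lemma, resolving the indeterminacy of $X \dashrightarrow X'$ by a common smooth projective model $Z$ and pulling the $k$-point through $Z$.

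The main obstacle will be this last descent step, since the $k$-point obtained on the singular $X'$ need not lie in the locus where $X \dashrightarrow X'$ is an isomorphism, nor automatically lift to a $k$-point on the resolution $Z$. To handle this, one must exploit the precise structure of MMP in characteristic zero: both flips and divisorial contractions are isomorphisms in codimension one, and terminal singularities are smooth in codimension two, so a $k$-point on $X'$ in the smooth locus away from the exceptional and flipping loci lifts cleanly through each MMP step. Ensuring that one can always choose the $k$-point to avoid the relevant bad loci --- presumably by moving it along the rational curves supplied by rational connectedness --- is the genuine technical heart of the equivalence, and it is where the argument relies crucially on the full strength of the MMP.
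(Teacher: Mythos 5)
The statement you are proving is a quoted black box in the paper: it is Theorem~1.3 of Pieropan's article, which the paper cites without reproducing its proof. So the relevant comparison is not with an argument in the paper but with the correctness of your reconstruction.

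Your direction (i)~$\Rightarrow$~(ii) is fine: a terminal Fano is rationally connected (Zhang, Hacon--McKernan), a Hironaka resolution $\tilde Y\to Y$ is a smooth projective rationally connected variety of the same dimension, (i) supplies a $k$-point on $\tilde Y$, and this pushes forward. No issues there.

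The direction (ii)~$\Rightarrow$~(i) is where the argument has genuine gaps, and I do not think your proposed fixes close them.

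First, the descent from the MMP output $X'$ back to the smooth $X$ is not merely ``the technical heart''; as written it is false. Lang--Nishimura transfers rational points \emph{from} a smooth source \emph{to} a proper target along a rational map, so a $k$-point on $\tilde X$ gives one on $X'$, not the reverse. Your fix relies on the claim that ``both flips and divisorial contractions are isomorphisms in codimension one,'' but a divisorial contraction $X_i\to X_{i+1}$ contracts a prime divisor $E$ to a subset of codimension $\geq 2$ in $X_{i+1}$, so the inverse rational map $X_{i+1}\dashrightarrow X_i$ has nontrivial indeterminacy and the two sides are emphatically \emph{not} isomorphic in codimension one. A $k$-point of $X'$ can perfectly well sit in the image of $E$, and then there is no reason for it to lift. ``Moving the $k$-point along rational curves'' cannot repair this over a non-closed field: rational connectedness gives curves through pairs of points over $\overline{k}$, not a mechanism to slide a $k$-rational point off a bad locus while keeping it $k$-rational. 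What actually makes the descent work (and what I believe Pieropan uses, packaged through her Lemma 3.1, which reduces the smooth projective version to the statement for arbitrary proper rationally connected varieties) is that the fibers of a resolution $Y\to X'$ of a terminal (more generally klt) singularity are rationally chain connected, by Hacon--McKernan's solution of the Shokurov conjecture; one can then feed these lower-dimensional fibers back into the induction to lift the $k$-point through the resolution. Without this input the descent step does not go through.

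Second, the Mori fiber space case $\dim B\geq 1$ has an independent gap: once you pick $b\in B(k)$ by induction, the scheme-theoretic fiber $X'_b$ over that specific $k$-point is in general \emph{not} a terminal Fano, nor even a reduced, irreducible, or rationally connected variety (only the general geometric fiber is). So ``a further application of (ii) or of the inductive statement to the fiber $X'_b$'' is not available as stated; one would at best get that $X'_b$ is a degeneration of Fano varieties, hence rationally chain connected over $\overline{k}$, and again one has to invoke the machinery above (resolution, RCC $\Rightarrow$ RC for a smooth model, induction on dimension) to produce a $k$-point on it.

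Finally, a remark orthogonal to your argument but relevant for the way this fact is used in the paper: the uniformity in Proposition~\ref{sentencephi} comes not from this equivalence alone but from Birkar's boundedness of terminal Fano varieties in each dimension, which is what lets one turn (ii) into a single first-order sentence. Your sketch would need to be complemented by that input to serve the paper's purposes, but that is a separate matter from the correctness of the equivalence itself.
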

\begin{rem}
The first clause in \cite{pieropan} is about proper rather than projective varieties; this does not make a difference by Lemma 3.1 \cite{pieropan}. We also note that the second clause further restricts the class of Fano varieties but we will not need it. 
\end{rem}
%Using boundedness of terminal Fano varieties \cite{birkarfano}, Pieropan shows:
%Using Birkar’s boundedness of Fano varieties \cite{birkarfano}, 
Pieropan proves the following result with $N$ and $f_i$ a priori depending on $k$, but a slight modification of the proof shows that this  is not necessary.
%obtains the following result which is valid even over ically closed fields:
\begin{fact} [cf. Proposition 3.4 \cite{pieropan}]\label{boundedness}
For any $d\in \N$, there exist $N\in \N$ and finitely many polynomials $f_1, ..., f_s \in \Q[t]$ such
that for every field $k$ of characteristic $0$ and every terminal Fano $k$-variety $X$ of dimension $ d$, there exists an embedding $X\subseteq \mathbb{P}^N_k$ such that $X$ has Hilbert polynomial $f_i$ for some $i\in  \{1,...,s\}$.
\end{fact}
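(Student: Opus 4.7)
The plan is to reduce to the boundedness of terminal Fano varieties of dimension $d$ in characteristic $0$, which says that there exists a Noetherian $\Q$-scheme $T$ and a flat projective morphism $\pi \colon \mathcal{X} \to T$ such that every terminal Fano $d$-fold over an algebraically closed field of characteristic $0$ appears as a geometric fiber $\mathcal{X}_t$. This is the main nontrivial input and rests on results from the minimal model program (going back to Kawamata--Matsusaka for the canonical/terminal case and culminating in Birkar's proof of the BAB conjecture).

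From this I would extract the constants as follows. Since $-K_{\mathcal{X}/T}$ is relatively ample and $T$ is Noetherian, there is a uniform $m \in \N$ with $-m K_{\mathcal{X}/T}$ relatively very ample. By cohomology-and-base-change applied to the flat family, $\pi_* \mathcal{O}_{\mathcal{X}}(-m K_{\mathcal{X}/T})$ is locally free, and after a finite stratification of $T$ its rank is constant on each stratum, with value $N_j + 1$; set $N := \max_j N_j$. The Hilbert polynomial of $\mathcal{X}_t$ with respect to $-m K_{\mathcal{X}_t}$ is likewise locally constant, so it takes only finitely many values $f_1, \ldots, f_s$ as $t$ varies over $T$.

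To descend to an arbitrary field $k$ of characteristic $0$: given a terminal Fano $X/k$ of dimension $d$, the base change $X_{\bar k}$ sits in the bounded family, so $-m K_{X_{\bar k}}$ is very ample, and by faithfully flat descent $-m K_X$ is very ample on $X$. The complete linear system $|-m K_X|$ yields a $k$-embedding $X \hookrightarrow \mathbb{P}^{N'}_k$ with $N' \leq N$ and Hilbert polynomial one of the $f_i$. Composing with a $k$-linear embedding $\mathbb{P}^{N'}_k \hookrightarrow \mathbb{P}^N_k$ as a coordinate subspace produces $X \subseteq \mathbb{P}^N_k$ with the same Hilbert polynomial, since $\mathcal{O}_{\mathbb{P}^N}(1)$ restricts to $\mathcal{O}_{\mathbb{P}^{N'}}(1)$.

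The main obstacle, and the reason characteristic $0$ is essential, is the invocation of boundedness of terminal Fanos, which remains open in positive and mixed characteristic. Once boundedness is granted the rest is routine bookkeeping, and the only modification of Pieropan's Proposition 3.4 is to carry out the construction over the universal family $\mathcal{X} \to T$ defined over $\Q$, so that the resulting constants $N$ and $f_1, \ldots, f_s$ are manifestly independent of $k$.
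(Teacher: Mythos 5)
Your approach is in the same spirit as the paper's --- both reduce to Birkar's boundedness of terminal Fano $d$-folds in characteristic~$0$ and then extract a uniform twist $m$, projective dimension $N$, and Hilbert polynomials $f_1,\dots,f_s$ --- but the two proofs package the descent to arbitrary fields of characteristic $0$ differently. You assert at the outset that there is a single flat projective family $\mathcal{X}\to T$ over $\Q$ whose geometric fibers realize every terminal Fano $d$-fold over every algebraically closed field of characteristic $0$. That is a genuinely stronger statement than Birkar's Theorem~1.1 as cited in the paper, which gives boundedness over a fixed algebraically closed field (in practice $\Cc$); upgrading it to a family defined over $\Q$ that still captures Fanos over, say, algebraically closed fields of cardinality larger than $\Cc$ requires a spreading-out argument together with a Lefschetz-type transfer, which you do not spell out. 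The paper sidesteps this entirely with the observation that it suffices to treat \emph{countable} fields $k$, since any terminal Fano is defined over a finitely generated (hence countable) subfield and the embedding and Hilbert polynomial are preserved under base change; each countable field of characteristic $0$ then embeds into $\Cc$, and Birkar's theorem is invoked directly over $\Cc$. Your proposed route buys a cleaner-looking universal object, at the cost of the unjustified descent-to-$\Q$ step; the paper's route buys minimal prerequisites, at the cost of the slightly awkward choice of embeddings $\iota_k\colon k\hookrightarrow\Cc$. Your remaining steps (taking $m$ with $-mK_{\mathcal{X}/T}$ relatively very ample after stratifying $T$, flat base change for $h^0$, faithfully flat descent of very ampleness to obtain the embedding over $k$ rather than $\bar{k}$, and the linear embedding $\mathbb{P}^{N'}_k\hookrightarrow\mathbb{P}^N_k$ preserving the Hilbert polynomial) are correct and match what Pieropan's Proposition~3.4 carries out, with the minor caveat that you should note $K_{\mathcal{X}/T}$ is a priori only $\Q$-Cartier --- terminality gives $\Q$-Gorensteiness --- so ``$-mK$ very ample'' should be read with $m$ a sufficiently divisible integer.
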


\begin{proof}
It is enough to provide such $N$ and $f_i$ which work for countable fields of characteristic $0$, since every variety is defined over a countable field. For every such field $k$ of characteristic $0$, we fix an embedding $\iota_k:k\hookrightarrow \Cc$. By Birkar’s Theorem~1.1 \cite{birkarfano}, projective $d$-dimensional terminal Fano $\Cc$-varieties form a bounded family. Hence there exist a scheme $T$ of finite type over $\Cc$ and a projective morphism $ \pi \colon V \to T$ such that every $d$-dimensional terminal Fano $\Cc$-variety $X$ is isomorphic to a fiber $V_t$ for some $t \in T(\Cc)$. Using a flattening stratification of  $T$ (see Tag 0H3Y \cite{sp}), we obtain finitely many projective, flat morphisms
\[
  \pi_i  : V^i \to T^i, \qquad i = 1,\dots,s,
\]
such that every $d$-dimensional terminal Fano $\Cc$-variety appears as a fibre of some $\pi_i$. In particular, for every countable field $k$ of characteristic $0$ and every terminal Fano $k$-variety $X$ of dimension $d$, there exist $i\in\{1,\dots,s\}$ and $t\in T^i(\Cc)$ such that
\[
  V^i_t \cong X_\Cc,
\]
where $X_{\Cc}$ denotes the base change of $X$ from $k$ to $\Cc$ via $\iota_k$.
%It follows that, for every countable field $k$ of characteristic $0$ and every terminal Fano $k$-variety $X$ of dimension $d$, there are $i\in \{1,...,s\}$ and $t\in T^i(\Cc)$ such that $V^i_t\cong X_{\Cc}$, where $X_{\Cc}$ is the base change from $k$ to $\Cc$ via the embedding $\iota_k$. 
The rest of the proof is as in Proposition 3.4 \cite{pieropan}.
\end{proof}
%This is stated over an algebraically closed 
%\begin{fact} [Proposition 2.6 \cite{dFF}]
%Let $X\to T$ be a flat, projective morphism with reduced fibers. Suppose that $T$ is reduced of finite type over a characteristic $0$ field. Then there is a constructible subset $Z\subseteq T$ such that $X_z$ is rationally connected if and only if $z \in Z$.
%\end{fact} 
The proof of the next result is similar to the proof of Theorem 11 \cite{duesler}. (Note that there is a small issue with their proof: The morphism $g$ should be the universal family over the Hilbert scheme rather than what is written there.)  
\bl \label{dueslersent}
For every Hilbert polynomial $f \in \Q[t]$ and $N\in \N$, there is a $\forall \exists$-sentence $\varphi$ in the language of rings such that for every field $k$ of characteristic $0$ the following are equivalent: 
\begin{enumerate}[label=(\roman*)] 

\item Every projective rationally connected $k$-variety $X\subseteq \mathbb{P}^N_k$ with Hilbert polynomial $f$ has a $k$-rational point. 

\item The sentence $\varphi$ holds in $k$.

\end{enumerate}
%Every projective rationally connected $k$-variety with Hilbert polynomial $f$, we have  
%$$X(k)\neq \emptyset \iff \varphi\mbox{ holds in }k$$
\el 
\begin{proof}
%FISHY SEEMS TO USE HILBERT SCHEMES THE WRONG WAY.
%Although the authors assume characteristic $0$, 
%Their argument ultimately relies on the fact that rational connectedness is an open condition in smooth families (see IV, Theorem 3.11 \cite{kollarbook}). It is enough to know that it is a constructible condition and this is true in arbitrary flat families with reduced fibers by Proposition 2.6 \cite{dFF}. 
%numerical polynomial
%If $f$ is not a Hilbert polynomial, then part (i) is vacuously true and we take $\varphi$ to be any tautology. We can therefore assume $f$ is a Hilbert polynomial. 
Let  $H_f$ be the component of the Hilbert scheme parametrizing closed subschemes of $\mathbb{P}^N$ with Hilbert polynomial $f$. Let $\pi: \mathcal{U} \to H_f$ be the universal family. 
%For each $k$-variety there is a morphism $\Spec(k)\to H_f$ such that the pullback. 
By Proposition 2.6(a) \cite{dFF}, there is a constructible subset $Z\subseteq H_f$ such that for each $z\in H_f$ we have that 
$$\pi^{-1}(z)\mbox{ is a rationally connected }\kappa(z)\mbox{-variety} \iff z\in Z$$
where $\kappa(z)$ denotes the residue field of $Z$ at the point $z$. Since $\mathcal{U},H_f$ and  $\pi$ are of finite type over $\Q$ and $Z\subseteq H_f$ is constructible, there is a $\forall\exists$-sentence $\varphi$ in the language of rings expressing (uniformly in $k$) that for every $ z \in Z(k)$ the fiber $\pi^{-1}(z)$ has a $k$-rational point. By the universal property of  the Hilbert scheme, for every field $k$  of characteristic $0$ and every projective $k$-variety $X\subseteq \mathbb{P}^N_k$ with Hilbert polynomial $f$,
%\\
%\\
%$(i)\Rightarrow (ii)$: Clear.\\
%$(ii)\Rightarrow (i)$: 
there is a pullback diagram 
\[
  \begin{tikzcd}[column sep=4.5em, row sep=2.5em]
    X \arrow[r] \arrow[d]  & \mathcal{U} \arrow[d] \\
    \Spec(k)  \arrow[r, "x"]&  H_f
     \end{tikzcd}
\]
Moreover, we have that $X$ is rationally connected if and only if $x\in Z(k)$. It easily follows that (i) and (ii) are equivalent.
\end{proof}
\begin{rem}
We note that Proposition 2.6(a) \cite{dFF}, which was used in the above proof, is stated over algebraically closed fields. This assumption is not necessary since the proof in \textit{loc.cit} relies on resolution of singularities and IV, Theorem 3.11 \cite{kollar}, both of which are valid for arbitrary fields of characteristic $0$. 
\end{rem}
\bp \label{sentencephi}
For any $d\in \N$, there is a $\forall \exists$-sentence $\varphi_d$ in the language of rings such that for every field $k$ of characteristic $0$, we have that
$$k \mbox{ is geometrically }C_1\mbox{ up to dimension }d \iff \varphi_d \mbox{ holds in } k  $$
\ep 
\begin{proof}
%Clear from Lemma \ref{dueslersent} and Facts \ref{pierofact} and \ref{boundedness}.
%By Fact \ref{pierofact}, we equivalently need to express that every terminal Fano variety of dimension $\leq d$ has a $k$-point. 
By Fact \ref{boundedness}, there are  $N\in \N$ and finitely many polynomials $f_1, ..., f_s \in \Q[t]$ such
that for every field $k$ of characteristic $0$ and every terminal Fano $k$-variety $X$ of dimension $\leq d$, there exists an embedding $X\subseteq \mathbb{P}^N_k$ such that $X$ has Hilbert polynomial $f_i$ for some $i\in  \{1,...,s\}$. Let $\varphi_i$ be a sentence associated to $f_i$ and $N$ as in Lemma \ref{dueslersent}. %expressing that every projective rationally connected $k$-variety with Hilbert polynomial $f_i$ has a $k$-rational point. 
We claim that the conjunction $\varphi_d$ of the $\varphi_i$'s has the desired property.\\
%By Fact \ref{dueslersent}, 
\quotes{$\Rightarrow$}: Let $k$ be of characteristic $0$ and geometrically $C_1$ up to dimension $d$. By Lemma 3.1 \cite{pieropan}, every projective rationally connected $k$-variety (not necessarily smooth) of dimension $\leq d$  has a $k$-point. In particular, this is true for those rationally connected varieties $X\subseteq \mathbb{P}_k^N$ with Hilbert polynomial $f_i$, for each $i=1,...,s$. It follows that  $\varphi_d$ holds in $k$. \\
\quotes{$\Leftarrow$}: Let $k$ be of characteristic $0$ and suppose that  $\varphi_d$ holds in $k$. Then, for every $i=1,...,s$, every rationally connected $k$-variety with Hilbert polynomial $f_i$ has a $k$-point. Since terminal Fano varieties are rationally connected \cite{Zhang,HM}, all such $k$-varieties of dimension $\leq d$ have a $k$-point. We conclude from Fact \ref{pierofact} that $k$ is geometrically $C_1$ up to dimension $d$. 
\end{proof}

\subsection{Perfectoid $C_1$ transfer} \label{perfectoidc1transfer}
The following theorem is due to Graber-Harris-Starr \cite{graber} in characteristic $0$ and de Jong-Starr \cite{dejongstarr} in arbitrary characteristic. 
%It says that every RC fibration over a curve has a section:
\begin{fact} [de Jong-Graber-Harris-Starr] \label{ghs}
Let $C$ be a smooth projective irreducible curve over an algebraically closed field $k$. Let $f:X\to C$ be a proper flat $k$-morphism whose generic fiber is smooth and separably rationally connected. Then $f$ has a section.  
%L Let $C$ be a $k$-curve over an algebraically closed field $k$ and $f:X\to C$ be a  
%Then the function field $k(C)$ is geometrically $C_1$.
%Let $k$ be an algebraically closed field. 
\end{fact}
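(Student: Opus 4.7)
The plan is to reproduce the Graber--Harris--Starr/de Jong--Starr strategy via the method of \emph{combs}. The goal is to produce a curve $\Sigma \subset X$ which maps isomorphically to $C$; equivalently, a morphism $s:C\to X$ with $f\circ s = \id_C$. The existence of such a $\Sigma$ will be obtained by deforming a reducible nodal curve (a comb) so that its handle becomes a section.

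First I would fix some setup: since $f$ is proper and flat and the generic fiber is smooth, there is a dense open $U\subseteq C$ over which $f$ is smooth; outside $U$ the situation will be controlled using Hironaka-type alterations in characteristic $0$ or the analogous results for resolution of indeterminacies of sections available in any characteristic (as in de Jong--Starr). By taking hyperplane sections of $X$ embedded projectively, one produces a \emph{multisection}: an irreducible curve $D\subseteq X$ such that $f|_D: D \to C$ is finite and surjective. The next step is to modify $D$ so that its restriction to the generic fiber consists of general points through which very free rational curves pass.

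Next, over a large finite set of points $\{p_1,\dots,p_N\} \subseteq U$ one attaches a very free rational curve $R_i\subset X_{p_i}$ passing through $D\cap X_{p_i}$. Here "very free" means that the normal bundle $N_{R_i/X_{p_i}}$ is ample; in characteristic $p$ this is where one uses separable rational connectedness rather than mere rational connectedness, since the latter is insufficient to produce curves with the required $H^1$-vanishing for $N_{R_i/X}$. The resulting reducible curve $\Sigma_0 = D \cup R_1 \cup \cdots \cup R_N$ is a \emph{comb} with handle $D$ and teeth $R_i$. The main deformation-theoretic input is then: for $N$ sufficiently large, one can find a deformation of $\Sigma_0$ in $X$ (as a stable map to $X/C$) that smooths all the nodes and \emph{detaches} the handle, producing a new irreducible curve $\Sigma$ whose projection to $C$ has degree $\deg(f|_D) - 1$. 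Iterating this trick eventually lowers the degree of the multisection to $1$, yielding a section.

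The hard part, and the real content of the theorem, is the smoothing/degree-reduction step. It requires a careful analysis of the obstruction space for deforming the comb, showing that attaching sufficiently many very free teeth makes the pertinent $H^1$ vanish, and then verifying that the generic smoothing does separate the handle from the rest. In characteristic $p>0$, the subtlety is precisely that a rationally connected fibration need not admit free rational curves on the generic fiber unless one assumes \emph{separable} rational connectedness; the de Jong--Starr refinement shows that this hypothesis is enough to run the entire comb-smoothing machine, and in particular to conclude that $f$ admits a section.
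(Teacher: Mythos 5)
This statement is presented in the paper as a cited Fact, with references to Graber--Harris--Starr and de Jong--Starr, so there is no in-paper proof to compare against; what follows is a comparison with those published arguments. Your sketch correctly identifies the main structural ingredients: reduce to a multisection $D$, attach very free rational curves in fibers to form a comb, and invoke \emph{separable} rational connectedness in positive characteristic to guarantee very free curves and the $H^1$-vanishing needed for the deformation theory.

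However, your description of the key step contains a genuine error. You assert that smoothing the comb $\Sigma_0 = D \cup R_1 \cup \cdots \cup R_N$ ``detaches the handle'' and produces an irreducible curve $\Sigma$ whose projection to $C$ has degree $\deg(f|_D) - 1$. Neither part of this is right. The two operations ``smooths all the nodes'' and ``detaches the handle'' are incompatible: smoothing all nodes yields an irreducible curve, so there is nothing left to detach. And the degree cannot drop under the smoothing: the teeth $R_i$ lie in fibers of $f$, hence contribute degree $0$ to the composition $\Sigma_0 \to C$, so any smoothing is a multisection of the \emph{same} degree $d = \deg(f|_D)$, the degree being constant in a flat family. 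What the comb smoothing actually buys is a multisection of unchanged degree whose normal bundle is globally generated with vanishing $H^1$, so that it moves in a large, flexible family. The degree reduction is a separate, subsequent degeneration: one uses this flexibility to specialize the smoothed multisection until it breaks into a lower-degree multisection together with vertical rational tails, and then one must verify that the lower-degree piece retains enough positivity to iterate. Conflating these two steps is the gap in your outline, and it sits precisely where the real work of the theorem lies.
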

%Equivalently, %we have that $X_{k(C)}(k(C))\neq \emptyset$. Since every 
Since any smooth projective separably rationally connected variety over the function field $k(C)$ spreads out to a family $f:X \to C$ as above, and a section of $f$ yields a
$k(C)$-rational point on the generic fiber, it follows that the function field
$k(C)$ is geometrically $C_1$. It easily follows that any field of transcendence degree 1 over an algebraically closed field is geometrically $C_1$. Less trivially, the same holds for \quotes{sufficiently nice} valued fields whose residue field has transcendence degree 1 over an algebraically closed field:
\bp \label{construction3}
Let $(K,v)$ be a tame field with divisible value group and residue field $k$ which has transcendence degree $1$ over an  algebraically closed field.  

\begin{enumerate}[label=(\roman*)] 

\item If $(K,v)$ is of equal characteristic, then $K$ is geometrically $C_1$.

\item If $(K,v)$ is of mixed characteristic and $ \overline{\Q}\subseteq K$, then $K$ is geometrically $C_1$.
\end{enumerate}
\ep  
\begin{proof}
Say $k$ is algebraic over $k_0(t)$, where $k_0$ is algebraically closed.\\
(i) By Proposition 3.1.5 \cite{KK4}, geometrically $C_1$ fields form an elementary class. For any $k$ as above, it suffices to construct a single geometrically $C_1$ tame field $(K,v)$ with divisible value group and residue field $k$, since Fact \ref{akekuhl} guarantees that every other such field has the same theory. By de Jong-Graber-Harris-Starr, the rational function field $\overline{k_0(z)}(t)$ over $\overline{k_0(z)}$ is geometrically $C_1$. The same is true for its perfect hull, being a colimit of geometrically $C_1$ fields. Let $v_z$ be a valuation on $\overline{k_0(z)}$ extending the $z$-adic valuation on $k_0(z)$. We equip $\overline{k_0(z)}(t)$ with the Gauss extension $u$ of $v_z$ on $\overline{k_0(z)}$, with value group $\Q$ and residue field $k_0(t)$. By Lemma \ref{construction2}, there is an algebraic extension $(K,v)$ of $(\overline{k_0(z)}(t),u)$ such that $(K,v)$ is tame with value group $\Q$ and residue field $k$. Since tame fields are perfect, we have that $K$ is a separable algebraic extension of the perfect hull of $\overline{k_0(z)}(t)$. Since the latter is geometrically $C_1$, the same is true for $K$ by Lemma 3.1.4(ii) \cite{KK4}.\\
(ii) Applying Fact \ref{kuhlake}, with $(\overline{\Q},v_p)$ as the base field, we see that any two tame fields over $(\overline{\Q},v_p)$ with divisible value group and residue field $k$ have the same theory. It therefore suffices to construct one such tame field which is geometrically $C_1$. Start with any algebraically closed valued field $(K_0,v_0)$ with residue $k_0$ and proceed as in (i) to construct a tame field $(K,v)$ extending $(K_0,v_0)$, with divisible value group and residue field $k$, and $K/K_0$ of transcendence degree $1$. 
%$(K_0,v_0)$
%The function field $K_0(t)$ is geometrically $C_1$ by Graber-Harris-Starr. By Lemma \ref{construction2}, there is an algebraic extension $(K,v)$ of $(K_0,v_0)$ such that $(K,v)$ is tame with divisible value group and residue field $k$. Moreover, the field $K$ is geometrically $C_1$ since it is a separable algebraic extension of a geometrically $C_1$ field.
%any two such fields are elementarily equivalent
\end{proof}
The above result is closely related to a theorem of Starr on the existence of rational points on degenerations of rationally connected varieties over function fields of curves:
\begin{fact}[Theorem 3.10 \cite{starr2}] \label{starrfact}
Let $R$ be a prime regular DVR with fraction field $K$ and residue field $k$ which has transcendence degree $1$ over an  algebraically closed field. Let $X$ be a flat, projective $R$-scheme with $X_K$ smooth and separably rationally connected. Then $X(k)\neq \emptyset$.
\end{fact}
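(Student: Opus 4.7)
The plan is to deduce Fact \ref{starrfact} from Proposition \ref{construction3} via the valuative criterion of properness. Let $(K,v)$ denote the fraction field $K$ of $R$ equipped with the discrete valuation associated to $R$, so that $v$ has value group $\Z$ and residue field $k$. The strategy is to produce a geometrically $C_1$ tame valued field extending $(K,v)$ whose residue field is still $k$, and then to propagate the rational point it provides to a section of $X$ over the extended valuation ring.

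The first and most delicate step is to construct a tame extension $(K',v')/(K,v)$ with divisible value group $\Q$ and residue field $k$ fitting the hypotheses of Proposition \ref{construction3}. In equal characteristic this is immediate from Lemma \ref{construction2} applied to $(K,v)$ with target group $\Q$ and target residue field $k$. In mixed characteristic one additionally needs $\overline{\Q}\subseteq K'$. To arrange this, pass first to the completion $\hat K$, fix an embedding $\overline{\Q_p}\hookrightarrow \overline{\hat K}$ into an algebraic closure, and form the compositum $L:=\hat K\cdot \overline{\Q_p}\subseteq \overline{\hat K}$. Then $L/\hat K$ is algebraic, so its residue field is an algebraic extension of $k$; moreover, every $\alpha\in \overline{\Q_p}$ with nonnegative valuation has residue algebraic over $\F_p$, hence contained in $\overline{\F}_p\subseteq k$, where the inclusion $\overline{\F}_p\subseteq k$ follows from the hypothesis that $k$ has transcendence degree one over an algebraically closed field of characteristic $p$. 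Consequently, the residue field of $L$ equals $k$, while its value group is $\Q$ (squeezed between those of $\overline{\Q_p}$ and $\overline{\hat K}$). Taking a maximal immediate algebraic extension of $L$, which is tame by Theorem 3.2 of \cite{Kuhl}, then produces $(K',v')$ as required.

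The remaining steps are routine. By Proposition \ref{construction3}---part (i) in equal characteristic and part (ii) in mixed characteristic, using $\overline{\Q}\subseteq K'$ by construction---the field $K'$ is geometrically $C_1$. The base change $X_{K'}$ remains smooth and separably rationally connected, hence carries a $K'$-rational point $P$. Since $X\to \Spec(R)$ is projective, hence proper, and $\Oo_{v'}$ is a valuation ring containing $R$, the valuative criterion of properness furnishes a unique morphism $\Spec(\Oo_{v'})\to X$ extending $P$. Restricting to the closed point of $\Spec(\Oo_{v'})$, whose residue field is $k$, yields the desired $k$-rational point of $X$.

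The main obstacle I anticipate is the mixed characteristic construction of $(K',v')$: specifically, verifying that the residue field of $L=\hat K\cdot \overline{\Q_p}$ is exactly $k$, rather than some strictly larger algebraic extension. This is precisely where the hypothesis on the residue field of $R$ enters in an essential way---without $\overline{\F}_p\subseteq k$ we would only obtain a point over an algebraic extension of $k$, with no evident mechanism for descent to a genuine $k$-point.
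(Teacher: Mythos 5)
Your overall strategy coincides with the paper's: build a tame extension $(K',v')$ of $(K,v)$ with divisible value group and residue field $k$, in the mixed characteristic case arranging $\overline{\Q}\subseteq K'$, then apply Proposition~\ref{construction3} and the valuative criterion of properness. However, there is a genuine gap in your mixed-characteristic construction.

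You assert that because every element of $\overline{\Q_p}$ with non-negative valuation has residue in $\overline{\F}_p\subseteq k$, ``the residue field of $L=\hat K\cdot\overline{\Q_p}$ equals $k$.'' This is a non sequitur: the residue field of a compositum of valued fields is in general strictly larger than the compositum of the individual residue fields, because cancellations in the value group produce new units. Concretely, take $k=\overline{\F}_p(t)$ and let $R$ be a complete DVR with residue field $k$ and uniformizer $\varpi$ satisfying $\varpi^2=p\,[t]$, where $[t]\in R$ lifts $t$. Then $\overline{\F}_p\subseteq k$ holds, yet $\varpi/p^{1/2}$ is a unit in $\Oo_L$ whose residue is a square root of $t$, which does not lie in $k$. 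So the residue field of $L$ can genuinely grow, and in that case the $K'$-point you obtain from Proposition~\ref{construction3} specializes only to a point over a proper algebraic extension of $k$, not to a $k$-point, so the final step breaks down.

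What saves the day --- and what you never invoke --- is the \emph{prime regular} hypothesis: $R$ is weakly unramified over a DVR with finite residue field, so (up to unit) $p$ itself is a uniformizer of $R$. Under this assumption, any finite extension $F/\Q_p$ factors as an unramified part (which is absorbed into $K$ since $\overline{\F}_p\subseteq k$) followed by a totally ramified part cut out by an Eisenstein polynomial over $\Z_p^{ur}$; because $K$ and $\Q_p$ share the uniformizer $p$, that polynomial remains Eisenstein over $\Oo_K$, so $K\cdot F/K$ is totally ramified and the residue field does not change. This is precisely the step the paper flags with ``the assumption on the DVR implies that $K\cdot\overline{\Q}$ is a totally ramified algebraic extension of $K$.'' You should also note, as the paper does, that this derivation via Lemma~\ref{construction2} only applies when $k$ is perfect, since that lemma requires a perfect target residue field and tame fields have perfect residue fields.

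Finally, your concluding remark misidentifies where the hypothesis on $k$ enters: the inclusion $\overline{\F}_p\subseteq k$ is needed, but it is not the point at which the argument is delicate; it is the ramification-theoretic condition on $R$, not the size of $k$, that controls the residue field of the compositum.
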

A \textit{prime regular} DVR is one which is weakly unramified over a DVR with finite residue field (see \S 3 \cite{starr2}). The prime regular assumption is relevant only in mixed characteristic and is similar in nature to our assumption that $ \overline{\Q}\subseteq K$. 
Let us explain how Fact \ref{starrfact} follows from Proposition \ref{construction3}, at least for perfect $k$. In fact, the argument is, in essence, quite close to Starr's original proof.
\begin{proof}[Proof sketch]
By Lemma \ref{construction2}, there is an algebraic extension $(K',v')$ of $(K,v)$ such that $(K',v')$ is tame with divisible value group and residue field $k$. Moreover, if $(K,v)$ is mixed characteristic, we can arrange that $K'$ contains $\overline{\Q}$. The point is that the assumption on the DVR implies that $K\cdot \overline{\Q}$ is a totally ramified algebraic extension of $K$ and therefore has residue field $k$.
%Namely, we have a subextension $(L,w)\subseteq (K,v)$ where $L$ is a finite extension of $\Q_p$. Let $E$ be a maximal totally ramified extension of $L$. Then  $E\cdot K/K$ is a totally ramified extension and therefore has residue field $k$. 
%Namely, let $\Lambda$ be a DVR with finite residue field such that $\Lambda \to R$ is regular. 
By Proposition \ref{construction3}, $K'$ is geometrically $C_1$ and therefore $X(K')\neq \emptyset$. By the valuative criterion of properness, we get that $X(\Oo_{K'})\neq\emptyset$ and thus $X(k)\neq \emptyset$.
\end{proof}

%We note that this follows from Proposition \ref{construction3} for perfect $k$. Namely, 
%Namely, let $R$ be a prime regular DVR with residue field $k$. Let $(K,v)$ be a tame field extending with divisible value group and residue field $k$. 

\bt \label{c1transfer}
Given $d\in \N$, there is  a finite extension $E/\Q_p^{ur}$ such that every untilt of $\overline{\F}_p(\!(t^{1/p^{\infty}})\!)$ containing $E$ is geometrically $C_1$ up to dimension $d$. 
\et 
\begin{proof}
%By Graber-Harris-Starr, the function field $\Cc_p(t)$ is geometrically $C_1$. We equip $\Cc_p(t)$ with the Gauss valuation $\nu$, with value group $\Q$ and residue field $\overline{\F}_p(t)$. By Lemma \ref{construction2}, there is an algebraic extension $(K,v)/(\Cc_p(t),\nu)$ such that $(K,v)$ is tame with value group $\Q$ and residue field equal to the perfect hull of $\overline{\F}_p(t)^h$. 
Let $\varphi_d$ express the property \quotes{geometrically $C_1$ up to dimension $d$} as in Proposition \ref{sentencephi}. Let $K'$ be a tame field containing $\overline{\Q_p}$ with divisible value group and residue field $\overline{\F}_p(t)^{h, \text{perf}}$, obtained via the construction of Proposition \ref{construction3}(ii) with $K_0=\overline{\Q_p}$. By the same proposition, the sentence $\varphi_d$ holds in $K'$.  By Fact \ref{perfectJK}, we know that $\overline{\F}_p(t)^{h,\text{perf}}$ is elementarily equivalent to $\overline{\F}_p(\!(t^{1/p^{\infty}})\!)$. Applying Proposition \ref{perfprop} for $F=\overline{\F}_p(\!(t^{1/p^{\infty}})\!)$ and $K'$ as above, there is a finite extension $E/\Q_p^{ur}$ (the compositum of the field $E$ from \ref{perfprop} with $\Q_p^{ur}$) such that $\varphi_d$ holds in every untilt of $\overline{\F}_p(\!(t^{1/p^{\infty}})\!)$ containing $E$. Thus, every such untilt is geometrically $C_1$ up to dimension $d$.
%We have thus constructed a tame field $(K,v)$ with divisible value group and residue field elementarily equivalent to $\overline{\F}_p(\!(t^{1/p^{\infty}})\!)$. 
%Moreover, the field $K$ is geometrically $C_1$ since it is a separable algebraic extension of a geometrically $C_1$ field (see Lemma 3.1.4(ii) \cite{KK4}). 
%The idea is to construct a tame field with divisible value group and residue field elementarily equivalent to the tilt, but which is also geometrically $C_1$. 
%Let $K$ be a non-principal ultraproduct of the $K_i$'s. By Theorem \ref{JKimproved}, we get that $K$ admits a valuation with residue field elementarily equivalent to $\widehat{\overline{\F}_p(\!(t)\!)^{1/p^{\infty}}}$. We will now construct another tame valued field $(K',v')$ with divisible value group and residue field $k'\equiv k$ but which is geometrically $C_1$.
%Let $\Cc_p$ be the $p$-adic completion of $\overline{\Q_p}$ and 
%We equip $\Cc_p(t)$ equipped with the Gauss valuation. 
%Consider an algebraic extension $(L,w)$ of $\Cc_p(t)$ which is henselian defectless and with 
%Let $L$ be with residue field $\overline{\F}_p(t)$. 
%Let also $\Cc_p(T^{1/p^{\infty}})$ be the algebraic extension of $\Cc_p(T)$ obtained by adjoining a system of $p$-power roots of $T$. 
%Since $K'$ is an algebraic extension of $\Cc_p(T)$, the same is true for $K'$. 
%By Theorem 1.4 \cite{Kuhl}, we get that $(K,w)\equiv (K',w')$. In particular, we get that $K$ is geometrically $C_1$. 
%$K$ is geometrically $C_1$ up to dimension $d$, for all but finitely many of the $K_i$'s.
\end{proof}

\bc \label{immediate}
Given $d\in \N$, there is  a finite extension $E/\Q_p^{ur}$ such that $\widehat{E(\pi^{1/p^{\infty}})}$ is geometrically $C_1$ up to dimension $d$, for any uniformizer $\pi$ of $E$.
\ec 
\begin{proof}
Let $E$ be as in Theorem \ref{c1transfer} and $K=\widehat{E(\pi^{1/p^{\infty}})}$. A standard computation shows that 
$$\Oo_K/(\pi)\cong \Oo_E[X_1,X_2...]/(\pi,X_{1}^p-\pi,X_2^p-X_1,...)\cong \overline{\F}_p[t^{1/p^{\infty}}]/(t)$$
It follows that $\varprojlim_{\varphi} \Oo_K/(\pi) \cong \overline{\F}_p[\![t^{1/p^{\infty}}]\!]$ and hence $K^{\flat} \cong \overline{\F}_p(\!(t^{1/p^{\infty}})\!)$.
\end{proof}

\bc \label{concrete1}
Given $d\in \N$, there is  a finite extension $E/\Q_p^{ur}$ such that  $E(\pi^{1/p^{\infty}})$ is geometrically $C_1$ up to dimension $d$, for any uniformizer $\pi$ of $E$.
\ec
\begin{proof}
One knows, e.g., by van den Dries' Theorem 2.1.5 \cite{KK1}, that $E(\pi^{1/p^{\infty}})$ has the same theory as its $p$-adic completion. Thus we may apply Corollary \ref{immediate}.
\end{proof}

\subsection{Conditional results} \label{conditional}

%Koll\'ar \cite{kollar} originally proposed the following conjecture with $R$ being the local ring at a $k$-point of a smooth curve over a characteristic $0$ field. Colliot-Th{\'e}l{\`e}ne \cite{colliot} later extended it to arbitrary DVRs.
%is expected to be true: 
%(see Suggestions 7.9 \cite{colliot}):
%\begin{Conj} \label{kollcoll}
%Let $R$ be a DVR with fraction field $K$ and residue field $k$. Let $X$ be a flat, projective $R$-scheme such that the generic fiber $X_K$ is smooth and separably rationally connected. Then, the special fiber $X_k$ admits a $k$-morphism from a smooth projective separably rationally connected $k$-variety.
%\end{Conj}
%\begin{rem}
%\begin{enumerate}[label=(\roman*)] 
%\item If in addition $X$ is regular and $X_k$ is a strict normal crossings divisor of $X$, it is expected that one of the irreducible components of $X_k$ is separably rationally connected (see Suggestions 7.9 \cite{colliot}).

%\item The conjecture is true if $R$ is a DVR of equal characteristic $0$, see \cite{hogadi} and Th\'eor\`eme 7.15 \cite{colliot}.
%\end{enumerate}
%\end{rem}
Given a valued field $(K,v)$ and a proper $K$-variety $X$, an $\Oo_K$\textit{-model} of $X$ is a flat, proper $\Oo_K$-scheme whose generic fiber is isomorphic to $X$. By the valuative criterion of properness, a $K$-point of $X$ induces a $k$-point on each $\Oo_K$-model. The following result gives a converse in case $(K,v)$ is sufficiently nice.
\begin{fact} [Theorem 1.2.2 \cite{KK4}]\label{criterion}
Let $(K,v)$ be a tame field with divisible value group of rank $1$ and residue field $k$. Let $X$ be a proper $K$-variety. Suppose that $\Xx(k)\neq \emptyset$ for each $\Oo_K$-model $\Xx$ of $X$. Then $X(K)\neq \emptyset$.
%\begin{enumerate}[label=(\roman*)] 
%\item 
%\item 
%\end{enumerate}
\end{fact}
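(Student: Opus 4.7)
The plan is to reduce the problem to finding a smooth $k$-rational point on some $\Oo_K$-model and then to lift that point to an $\Oo_K$-point by henselianity. Concretely, if $\Xx$ is an $\Oo_K$-model of $X$ and $y \in \Xx(k)$ is a point at which the structure morphism $\Xx \to \Spec(\Oo_K)$ is smooth, then because $(K,v)$ is tame and hence henselian, Hensel's lemma for smooth morphisms produces a section $\sigma\colon \Spec(\Oo_K)\to \Xx$ whose reduction modulo the maximal ideal is $y$. Composing $\sigma$ with the open immersion of the generic fibre $X \hookrightarrow \Xx$ then yields a $K$-point of $X$. So it suffices to exhibit a smooth $k$-rational point on some $\Oo_K$-model under the given hypothesis.

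To produce such a point, I would begin with any initial $\Oo_K$-model $\Xx_0$ of $X$ (obtained, say, by spreading out $X$ and applying Nagata compactification) and a $k$-point $y_0 \in \Xx_0(k)$ supplied by the hypothesis. Choose an $\Oo_K$-valuation $w$ on the function field $K(X)$ whose centre on $\Xx_0$ is $y_0$; because $\Gamma$ is divisible of rank one and $(K,v)$ is tame, one can refine $w$ so that it is itself tame over $(K,v)$. Local uniformization for tame places, due to Knaf-Kuhlmann, then yields a proper birational modification $\Xx_1 \to \Xx_0$ such that $w$ admits a regular centre $y_1 \in \Xx_1$. If the residue extension $\kappa(y_1)/k$ happens to be trivial, the first paragraph finishes the proof.

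The main obstacle is the case $\kappa(y_1)\neq k$, where $Z_1 := \overline{\{y_1\}}$ is a positive-dimensional $k$-subvariety of the special fibre of $\Xx_1$. My plan is to iterate: blow up $\Xx_1$ along $Z_1$, apply the hypothesis to this new model to extract a $k$-point on the exceptional divisor lying above $Z_1$, choose a fresh tame $\Oo_K$-valuation centred there, and invoke local uniformization again. Because the transcendence degree of $\kappa(y_i)/k$ drops strictly at each step, the process must terminate at a regular point with trivial residue extension, whereupon the first paragraph applies. The delicate point requiring care is that at each iteration one must check that the hypothesis still supplies a $k$-point compatible with the valuation-theoretic data, and that the tame-valuation output of Knaf-Kuhlmann can be tracked coherently through the modifications. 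Equivalently, one must show that the Zariski-Riemann space of $\Oo_K$-valuations on $K(X)/K$ contains a place with both trivial residue extension and trivial value-group extension; the algebraic maximality inherent in tameness then guarantees that such a place corresponds to a genuine $K$-rational point of $X$, closing the argument.
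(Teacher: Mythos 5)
The paper does not prove this statement; it is cited verbatim as Fact~\ref{criterion} from Theorem~1.2.2 of \cite{KK4}, so there is no proof here to compare against. Evaluating your attempt on its own terms, the first paragraph (reduce to a smooth special-fibre point, lift by henselianity) is the right target, but the iterative mechanism you propose for reaching that target contains a genuine gap that I do not see how to close.

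The gap is in the sentence \enquote{blow up $\Xx_1$ along $Z_1$, apply the hypothesis to this new model to extract a $k$-point on the exceptional divisor lying above $Z_1$.} The hypothesis only asserts that $\Xx_2(k)\neq\emptyset$; it gives no control over \emph{where} that $k$-point sits. The new $k$-point may lie far away from the exceptional locus and bear no relation to the valuation $w$ or to the chain of centres you have been tracking. In other words, the hypothesis hands you a $k$-point on each model but not a \emph{compatible system} of $k$-points across models, and your iteration silently assumes compatibility. The related claim that $\text{tr.deg}(\kappa(y_i)/k)$ strictly decreases at each step is unjustified for the same reason: with an arbitrary fresh $k$-point, the residue field of the newly chosen valuation can jump around. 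What the proof actually needs is a compactness argument (on the Zariski--Riemann space $\varprojlim_{\Xx}\Xx$ of $\Oo_K$-models, or equivalently on the inverse system of sets $\Xx(k)$) to produce a \emph{single} valuation $w$ on $K(X)$ extending $v$ whose centre on \emph{every} model is a $k$-point; only then are tameness and local uniformization brought to bear on one coherent object rather than a drifting sequence of them. Two further points worth flagging, though they are secondary to the main gap: (a) $\Oo_K$ is a non-Noetherian, non-discrete valuation ring (the value group is divisible of rank $1$), so regularity of a centre does not carry its usual Noetherian meaning, and the Knaf--Kuhlmann local uniformization theorems must be invoked in their correct relative form; (b) even with $\kappa(y_1)=k$, a \emph{regular} centre is not automatically a \emph{smooth} point of $\Xx_1\to\Spec\Oo_K$, and it is smoothness, not regularity, that feeds Hensel's lemma in your first paragraph.
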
  
The (conditional) transfer principle below essentially follows from \cite{KK4} but we spell out the argument for the convenience of the reader. 
\bt \label{geomc1trans}
Assume Conjecture \ref{kollcoll}. Let $(K,v)$ be a tame field with divisible value group and 
%perfect 
%large 
geometrically $C_1$ residue field $k$. Then $K$ is geometrically $C_1$.
\et   
\begin{proof}
By Proposition 3.1.5 \cite{KK4}, geometrically $C_1$ fields form an elementary class. By Lemma 5.1.3 \textit{loc.cit}, we have that $(K,v)$ has an elementary valued subfield $(K',v')$ such that $\Oo_{K'}$ is a colimit of DVRs. Without loss of generality, we can therefore assume that $\Oo_K$ itself is a colimit of DVRs. Let $X$ be a smooth projective separably rationally  connected $K$-variety. Assuming Conjecture \ref{kollcoll}, for every $\Oo_K$-model $\Xx$ of $X$, there is a smooth projective separably rationally connected $k$-variety $Y$ and a $k$-morphism $Y\to \Xx_k$. Since $k$ is geometrically $C_1$, we get that $Y(k)\neq \emptyset$ and hence  $\Xx_k(k)\neq \emptyset$. Since this is true for every $\Oo_K$-model of $X$, Theorem 1.2.2 \textit{loc.cit} gives that $X(K)\neq \emptyset$.
\end{proof} 

%\begin{rem}
%Since $C_1$ fields are conjectured to be the same as geometrically $C_1$ fields, this is also strong evidence for the $C_1$ transfer problem.
% (Problem \ref{quest1} for $i=1$) has an affirmative answer.
%\end{rem}

\bt \label{conditionperf2}
Assume Conjecture \ref{kollcoll}. Let $K$ be a perfectoid field with tilt $K^{\flat}$ which is geometrically $C_1$. Then $K$ is geometrically $C_1$.
\et  
\begin{proof}
%By Proposition 3.1.5 \cite{KK4}, 
Using Fact \ref{JKimproved}, we have the following
$$K^{\flat} \mbox{ geom. }C_1 \iff k_w \mbox{ geom. }C_1 \stackrel{\ref{geomc1trans}}  \Rightarrow K_U \mbox{ geom. }C_1 \iff  K \mbox{ geom. }C_1 $$
where the first and last equivalences use that geometrically $C_1$ fields form an elementary class and that $K^{\flat}\equiv k_w$ and $K_U\equiv K$ respectively.
% and conclude from Fact \ref{JKimproved} and Proposition \ref{geomc1trans}.
\end{proof}
%This follows directly from the previous theorem and our work with Jahnke \cite{JK}. Since $\overline{\F}_p(\!(t)\!)$ is geometrically $C_1$: 
%this also yields a conditional answer to Problem \ref{question3}:
\bp \label{dejongcompl}
The field $\overline{\F}_p(\!(t^{1/p^{\infty}})\!)$ is geometrically $C_1$.
\ep 
\begin{proof}
By Fact \ref{perfectJK}, the field  $\overline{\F}_p(\!(t^{1/p^{\infty}})\!)$ has the same theory as the perfect hull of $\overline{\F}_p(t)^h$. The latter is a colimit of function fields of curves over $\overline{\F}_p$ and is therefore geometrically $C_1$ by de Jong-Starr \cite{dejongstarr}. Since geometrically $C_1$ fields form an elementary class, the same is  true for $\overline{\F}_p(\!(t^{1/p^{\infty}})\!)$.
%, the function field $\overline{\F}_p(t)$ is  The same holds for $\overline{\F}_p(t)^{1/p^{\infty}}=\bigcup_{n\in \N} \overline{\F}_p(t^{1/p^n})$, being a colimit of geometrically $C_1$ fields. Since $(\overline{\F}_p(t)^{1/p^{\infty}})^h$ is a separable algebraic extension of $\overline{\F}_p(t)^{1/p^{\infty}}$, it is also geometrically $C_1$ by Lemma 3.1.4(ii) \cite{KK4}. 
\end{proof}
\begin{rem}
One could also argue as in the proof of Th\'eor\`eme 7.5 \cite{colliot}, using a generalized Greenberg approximation theorem due to Moret-Bailly \cite{MB} to reduce to the function field case.
\end{rem}

\bc
Assume Conjecture \ref{kollcoll}. Then every untilt of $\overline{\F}_p(\!(t^{1/p^{\infty}})\!)$ is geometrically $C_1$.
\ec
\begin{proof}
From Theorem \ref{conditionperf2} and Proposition \ref{dejongcompl}.
\end{proof}
%The reason the above results are restricted to the case where $k$ is either of characteristic $0$ or a finite field Esnault-Xu \cite{esnaultxu}.
%The proof uses a theorem of Esnault \cite{esnault2} which says that finite fields admits rational points---not only in rationally connected varieties---but also in degenerations of such varieties. 
%In that case, the statement is that $K$ is algebraically closed provided that $k$ is algebraically closed. 
%and follows from 

%A positive answer would follow from a positive answer to the following: 
%(possibly optimistic):
%Ax-Kochen do this by sending $p$ to infinity, thus proving an asymptotic version of Artin's conjecture. In an orthogonal direction, could try replacing $\Q_p$ with an infinitely ramified extension. 
%It may be that problem \ref{problem1} is too optimistic, 
%Note that problem \ref{problem0} follows from problem \ref{question1} for $i=2$ plus a standard decompletion argument.
%The case $i=1$ brings us naturally to the study of rationally connected varieties.
%\begin{rem}
%\begin{enumerate}[label=(\roman*)]

\bc 
Assume Conjecture \ref{kollcoll}. Then $\Q_p^{ur}(p^{1/p^{\infty}})$ is geometrically $C_1$.
\ec
\begin{proof}
As in the proof of Corollary \ref{concrete1}.
\end{proof}
%Assuming Conjecture \ref{kollcoll}, this implies that every smooth projective rationally connected variety $X$ over $\Q_p^{ur}$ has a closed point of $p$-power degree. We note that Esnault-Levine-Wittenberg \cite{esnaultindex} have unconditionally proved  a weaker statement, namely that such an $X$ has a zero-cycle of $p$-power degree. 
%a power of $p$.  
\begin{rem}
%While Conjecture \ref{kollcoll} may be hard in general, it may be that  
For the last two results, we only need the special case of Conjecture \ref{kollcoll} with $k$ being the function field of a curve over an algebraically closed field. As noted earlier, Starr proved this under an extra assumption on the DVR, so perhaps this special case of Conjecture \ref{kollcoll} is within reach. 
\end{rem}

\section{Perfectoid $C_2$ transfer}\label{C2}
We first prove our main theorem about untilts of $\F_p(\!(t^{1/p^{\infty}})\!)$. We then present a generalization in the context of rationally simply connected varieties, building on work of Starr-Xu.
%, building on work of Starr-Xu \cite{starrxu}. 
\subsection{Perfectoid $C_2$ transfer} \label{perfectoidc2}
We recall the following theorem, whose proof relies crucially on earlier work of Esnault \cite{esnaultdeligne,esnault2}.
\begin{fact}[Theorem 1.1.2 \cite{KK4}] \label{factkk4}
%Every maximal totally ramified extension of $\Q_p$ is $C_1$.
Every tame valued field with divisible value group and finite residue field is $C_1$. 
\end{fact}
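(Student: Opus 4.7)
The plan is to combine the geometric fact that low-degree Fano hypersurfaces are (separably) rationally connected with the model theory of tame fields and Esnault's theorem on rational points over finite fields.

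First, I would observe that a non-constant homogeneous form $f \in K[X_0,\ldots,X_n]$ of degree $d \leq n$ cuts out a hypersurface $X = V(f) \subseteq \mathbb{P}^n_K$ with anti-ample canonical bundle $\omega_X \cong \Oo_X(d-n-1)$. After passing to a resolution of singularities, $X$ may be assumed smooth, projective, and separably rationally connected (Koll\'ar--Miyaoka--Mori, with the appropriate positive-characteristic refinements), so producing a non-trivial zero of $f$ reduces to finding a $K$-rational point on $X$.

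Next, I would deploy the model theory of tame fields to reduce to a convenient setup. Being $C_1$ is axiomatizable by countably many $\forall\exists$-sentences, and by Fact \ref{kuhlake} any two tame fields over a common defectless base with the same finite residue field and elementarily equivalent divisible value groups are elementarily equivalent; since all non-trivial divisible ordered abelian groups share a theory, it suffices to verify $C_1$ for one tame field with residue field $\F_q$ and a divisible rank-$1$ value group, extending say $(\Q_p,v_p)$ in mixed characteristic or $(\F_q(t),v_t)$ in equal characteristic. For such a rank-$1$ tame field with divisible value group, Fact \ref{criterion} applies: $X$ has a $K$-point if and only if every flat proper $\Oo_K$-model of $X$ admits a $k$-rational point on its special fiber.

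The heart of the proof is producing these $k$-points. I would apply de Jong's alterations to replace any given proper flat $\Oo_K$-model by a regular proper one whose special fiber is a strict normal crossings divisor, each component being a smooth projective $\F_q$-variety. Then I would invoke Esnault's theorem in its Esnault--Fakhruddin--Rajan refinement: any smooth projective variety over $\F_q$ with trivial Chow group of zero-cycles over $\overline{\F}_q$ (for instance, any separably rationally connected one) admits an $\F_q$-rational point. The main obstacle, as usual in this circle of ideas, is to ensure that at least one component of the SNC special fiber inherits enough of the rational-connectedness of $X_K$ to trigger the theorem; this is precisely a special case of Conjecture \ref{kollcoll}. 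In our setting, however, one can bypass the full conjecture by exploiting that $X_K$ is a hypersurface: either by a direct degeneration analysis in low codimension, or by showing that the triviality of the geometric zero-cycle Chow group specializes from $X_K$ to some component of the reduced special fiber via proper base change, which is sufficient for Esnault--Fakhruddin--Rajan to apply and yield the required $k$-point.
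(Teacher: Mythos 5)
The paper does not give a proof of this statement; it is quoted as a Fact from \cite{KK4}, with only a pointer that the proof there relies on Esnault's theorems \cite{esnaultdeligne,esnault2}. So there is no in-paper argument to compare against, and I assess the proposal on its own terms. Your overall architecture is sound: reduce via Fact \ref{kuhlake} to a single tame field of rank $1$ with divisible value group and residue field $\F_q$, reduce via Fact \ref{criterion} to producing $\F_q$-points on special fibers of models, and invoke Esnault-type results over $\F_q$. However, there are two genuine gaps.

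First, the reduction to smooth $X$ is not available as stated. Resolution of singularities is unknown in positive characteristic, and the statement covers equal characteristic tame fields (residue field $\F_q$, ambient field of characteristic $p$), so one cannot simply \enquote{pass to a resolution.} Even in characteristic $0$, a resolution of a \emph{singular} hypersurface $V(f)\subseteq \mathbb{P}^n$ of degree $d\leq n$ need not be separably rationally connected---rational connectedness of low-degree hypersurfaces is a statement about the smooth ones. One must therefore deal with singular forms separately (e.g.\ extract a singular $K$-point, reduce the degree, etc.) or avoid the reduction to smooth $X$ altogether.

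Second, the final step does not quite land. You correctly note that the naive degeneration approach would require Conjecture \ref{kollcoll} to locate a rationally connected component of the special fiber, and you propose to bypass it by \enquote{specializing the triviality of the geometric $CH_0$ to some component of the reduced special fiber} and then invoking Esnault--Fakhruddin--Rajan. That specialization claim is not a theorem in the needed generality, and the result you cite concerns \emph{smooth projective} varieties over $\F_q$, whereas the special fiber of a model is typically reducible and singular. The tool that actually does the job---and the reason the paper cites \cite{esnault2}---is Esnault's theorem on regular models: if $\Xx$ is a regular flat projective scheme over a complete DVR with finite residue field $\F_q$ and fraction field of characteristic $0$, with smooth geometrically connected generic fiber satisfying $CH_0(X_{\overline{K}})\otimes\Q=\Q$, then $|\Xx_k(\F_q)|\equiv 1 \pmod q$. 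This yields the required $\F_q$-point on the \emph{whole} special fiber directly, sidestepping both Conjecture \ref{kollcoll} and the need to find a rationally connected component. One then needs alterations (de Jong/Gabber) to furnish a regular model without losing control of the generic fiber, plus a separate argument in equal characteristic where the above theorem does not apply as stated; the proposal addresses neither.
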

We also state Lang's transition theorem for the $C_i$ property:
\begin{fact}[\cite{langc1}]
Let $k$ be a $C_i$ field and $l/k$ be a field extension with $\text{tr.deg}(l/k)=s$. Then $l$ is $C_{i+s}$.
\end{fact}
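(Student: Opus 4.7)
The plan is to factor the extension and reduce to two standard cases. Choose a transcendence basis $t_1,\ldots,t_s \in l$, giving the tower
\[
k \subseteq k(t_1,\ldots,t_s) \subseteq l,
\]
where the second step is algebraic. The theorem then reduces to two separate claims: \textbf{(A)} if $F$ is $C_j$ then the purely transcendental extension $F(t)$ is $C_{j+1}$; and \textbf{(B)} if $F$ is $C_j$ then every algebraic extension of $F$ is again $C_j$. Iterating (A) from $k$ gives that $k(t_1,\ldots,t_s)$ is $C_{i+s}$, and then (B) transports the property to $l$.

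Both (A) and (B) rest on the same technical strengthening, which I would isolate as the main lemma: over any $C_i$ field, a system of $r$ forms of degree $d$ in $n$ variables has a common nontrivial zero whenever $n > rd^i$. The proof is by induction on $r$; the case $r=1$ is the definition of $C_i$, and the inductive step is a Lang-style elimination argument (combining two forms of the same degree into a single form of larger degree over an auxiliary indeterminate and reapplying the inductive hypothesis). I expect this lemma to be the principal technical obstacle; once it is in hand, (A) and (B) both follow mechanically.

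For (A) (the Tsen--Lang step): given a form $f(X_0,\ldots,X_n)$ of degree $d$ over $F(t)$ with $n \geq d^{j+1}$, clear denominators so the coefficients lie in $F[t]$ with $t$-degrees bounded by some $r$. Search for solutions of the form $X_l(t) = \sum_{e=0}^{N} a_{l,e} t^e$ with $N$ a free parameter. Expanding the equation $f = 0$ as a polynomial identity in $t$ produces a system of at most $dN + r + 1$ forms of degree $d$ over $F$ in the $(n+1)(N+1)$ unknown coefficients $a_{l,e}$. By the main lemma applied over $F$, this system has a nontrivial common zero provided $(n+1)(N+1) > (dN + r + 1)\, d^{j}$; for $N$ sufficiently large this reduces to $n + 1 > d^{j+1}$, which is our hypothesis. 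The resulting polynomials $X_l(t)$ are not all identically zero and give a nontrivial zero of $f$ over $F(t)$.

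For (B): it suffices to treat a finite extension $L/F$ of degree $m$ and then pass to unions in the infinite case. Given a form $f$ of degree $d$ over $L$ in $n+1 > d^j$ variables, fix an $F$-basis $e_1,\ldots,e_m$ of $L$, substitute $X_l = \sum_h Y_{l,h}\, e_h$, and expand $f$ in this basis to obtain $m$ forms $g_1,\ldots,g_m$ over $F$, each of degree $d$ in $m(n+1)$ variables. The main lemma over $F$ yields a common nontrivial zero once $m(n+1) > m\, d^{j}$, i.e.\ $n+1 > d^j$, which is the hypothesis. Applying (A) successively $s$ times and (B) once then completes the proof of the theorem.
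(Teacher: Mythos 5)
The paper records this as a Fact cited to Lang with no proof, so there is no in-paper argument to compare against. Your decomposition is the standard Lang reduction: factor $l/k$ through a transcendence basis, prove (A) the purely transcendental step and (B) the algebraic step, and derive both from a system lemma for $C_i$ fields. The numerics you give in (A) and (B) are both correct; in particular the count $(n+1)(N+1) > (dN+r+1)d^j$ in the Tsen--Lang step tends, as $N\to\infty$, to the hypothesis $n+1>d^{j+1}$, and the count $m(n+1)>md^j$ in the algebraic step reduces to $n+1>d^j$.

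The gap is in your sketch of the system lemma itself, which is indeed the whole technical content. Induction on $r$ by replacing two degree-$d$ forms with a single form of higher degree does not close numerically. Any single form $g$ over $k$ whose zero set is contained in the common zero set of $f_1,f_2$ must have degree $d'\geq 2d$ (the canonical construction is $g=N(f_1,f_2)$ for a normic form $N$, which has degree at least $2$). The inductive target then moves from $n>rd^i$ to $n>(r-1)(d')^i\geq (r-1)2^id^i$, and for $r\geq 2$, $i\geq 1$ one has $(r-1)2^i\geq r$, so the hypothesis needed for the inductive step is strictly stronger than what is available, and the induction fails. If instead the ``auxiliary indeterminate'' is meant to invoke $k(t)$, the argument is circular, since $k(t)$ being $C_{i+1}$ is exactly what step (A) is supposed to produce. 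The actual proof (Lang, with a correction by Nagata) handles algebraically closed $k$ by a dimension count in $\mathbb{P}^{n-1}$, and otherwise constructs normic forms over $k$ in arbitrarily many variables, replicates the given system across $h$ disjoint blocks of variables, packages the resulting $rh$ forms into a single form via a normic form, and lets $h\to\infty$ to recover the sharp bound $n>rd^i$. You are right to flag the system lemma as the principal obstacle, but the induction you propose for it would not go through; the lemma should either be cited (to Lang and Nagata) or proved by the normic-form amplification argument.
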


\bp \label{construction4}
Let $(K,v)$ be a tame field with divisible value group and residue field $k$ which has transcendence degree $1$ over $\F_p$.

\begin{enumerate}[label=(\roman*)] 

\item If $(K,v)$ is of equal characteristic, then $K$ is $C_2$.

\item If $(K,v)$ is of mixed characteristic and $K\cap \overline{\Q}$ is defectless with divisible value group, then $K$ is $C_2$.
\end{enumerate}
\ep  
\begin{proof}
We proceed as in Proposition \ref{construction3}. Suppose $k$ is an algebraic extension of $\F_p(t)$.\\
(i) For any $k$ as above, it suffices to construct one such tame field which is $C_2$, as any other will have the same theory by Fact \ref{akekuhl}. Let $\F_p(\!(z^{\Q})\!)$ be the Hahn series field with value group $\Q$ and residue field $\F_p$. By Fact \ref{factkk4}, we have that $\F_p(\!(z^{\Q})\!)$ is $C_1$.
Consider the rational functon field $\F_p(\!(z^{\Q})\!)(t)$ equipped with the Gauss valuation $u$, extending the $z$-adic valuation on $\F_p(\!(z^{\Q})\!)$, with value group $\Q$ and residue field $\F_p(t)$. By Lemma \ref{construction2}, there is an algebraic extension $(K,v)$ of $(\F_p(\!(z^{\Q})\!)(t),u)$ such that $(K,v)$ is tame with value group $\Q$ and residue field $k$. Since the extension $K/\F_p(\!(z^{\Q})\!)$ is of transcendence degree $1$ and $\F_p(\!(z^{\Q})\!)$ is $C_1$, we get that $K$ is $C_2$ by Lang's transition theorem. \\
(ii) We equip $K_0=K\cap \overline{\Q}$ with the valuation $v_0$ which is an extension of the $p$-adic valuation on $\Q$ to $K_0$. By Fact \ref{kuhlake}, any two tame fields extending $(K_0,v_0)$ with divisible value group and residue field $k$ have the same theory. Now proceed as in (i), with $\F_p(\!(z^{\Q})\!)$ replaced by $K_0$, to construct one such tame field $(K,v)$ which is $C_2$. Any other such tame field will then also be $C_2$. 
%has the same theory and is therefore $C_2$ as well.
%By Fact \ref{kuhlake}, it suffices to construct one such tame field which is geometrically $C_1$. Let $(K_0,v_0)$ be an algebraically closed valued field with residue $k_0$. The function field $K_0(t)$ is geometrically $C_1$ by Graber-Harris-Starr. By Lemma \ref{construction2}, there is an algebraic extension $(K,v)$ of $(K_0,v_0)$ such that $(K,v)$ is tame with divisible value group and residue field $k$. Moreover, the field $K$ is geometrically $C_1$ since it is a separable algebraic extension of a geometrically $C_1$ field.
%any two such fields are elementarily equivalent
\end{proof}

\bt \label{secondmainagain}
Let  $d\in \N$ and $E_{\infty}$ be a maximal totally ramified extension of $\Q_p$. Then there exists a finite subextension $E/\Q_p$ of $E_{\infty}/\Q_p$ such that every untilt of $\F_p(\!(t^{1/p^{\infty}})\!)$ containing $E$ is $C_2(d)$. 
\et
\begin{proof}
We equip $E_{\infty}(t)$ with the Gauss valuation $u$, with value group $\Q$ and residue field $\F_p(t)$. Let $(K',v')/(E_{\infty}(t),u)$ be an algebraic extension such that $(K',v')$ is tame with value group $\Q$ and residue field $\F_p(t)^{h,\text{perf}}$. By Proposition \ref{construction4}, we see that $K'$ is $C_2$. By Fact \ref{perfectJK}, we have that $\F_p(t)^{h,\text{perf}}\equiv \F_p(\!(t^{1/p^{\infty}})\!)$. We conclude from Proposition \ref{perfprop}.
\end{proof}

\bc \label{immediate2}
Given $d\in \N$, there is  a finite totally ramified extension $E/\Q_p$ such that $\widehat{E(\pi^{1/p^{\infty}})}$ is $C_2(d)$, for any uniformizer $\pi$ of $E$.
\ec 
\begin{proof}
As in the proof of Corollary \ref{immediate}.
\end{proof}

\bc \label{concrete}
Given $d\in \N$, there is  a finite totally ramified extension $E/\Q_p$ such that $E(\pi^{1/p^{\infty}})$ is $C_2(d)$, for any uniformizer $\pi$ of $E$.
\ec
\begin{proof}
As in the proof of Corollary \ref{concrete1}.
\end{proof}

We recall the first counterexample to Artin's conjecture by Terjanian \cite{Terj1} and observe that it admits rational points over every untilt of $\F_p(\!(t^{1/p^{\infty}})\!)$:
%It is a quartic form in 18 variables having only the trivial solution in $\Q_2$. Nevertheless, it has non-trivial solutions in every perfectoid field extending $\Q_2$: 
\begin{example}
Define 
$$G(\mathbf{X})=\sum_{i=1}^3 X_i^4- \sum_{i<j} X_i^2X_j^2-X_1X_2X_3(X_1+X_2+X_3)$$
where $\mathbf{X}=(X_1,X_2,X_3)$. Consider the quartic $\Xx\subseteq \mathbb{P}^{17}_{\Z_2}$ below
$$G(\mathbf{X})+G(\mathbf{Y})+G(\mathbf{Z})+4G(\mathbf{U})+4G(\mathbf{V})+4G(\mathbf{W}) =0$$
%Let $\Xx$ be the $\Z_2$-model given by the same equation and
Let $\Xx_s=\Xx\times_{\Z_2} \F_2$ be the special fiber.
%, which is an integral $\F_2$-scheme given by 
%$$G(\mathbf{X})+G(\mathbf{Y})+G(\mathbf{Z})=0$$
Terjanian observed that $\Xx(\Z/16\Z)=\emptyset$, which implies that $\Xx(\Q_2)= \emptyset$. One obstruction to the existence of a $\Q_2$-rational point is the fact that $\Xx_s^{\text{sm}}(\F_2)=\emptyset$. On the other hand, one checks that 
$$(1,1,t,1,1,t,0,...,0) \in \Xx_s^{\text{sm}}(\F_2(t))$$ 
%By Fact \ref{JKimproved}, $\F_2(t)$ embeds in the residue field  this means that $X_0^{sm}(k) \neq \emptyset$. 
By Hensel's Lemma, for any henselian valued field $K\supseteq \Q_p$ whose residue field contains $\F_p(t)$, we get that $\Xx(K)\neq \emptyset$. This applies to non-principal ultrapowers of untilts of $\F_p(\!(t^{1/p^{\infty}})\!)$ and hence $\Xx(K)\neq \emptyset$ for any such untilt. 
%We note that there is nothing special about perfectoid fields in this example and that $\Xx(K)\neq \emptyset$ for every infinitely ramified extension $K/\Q_p$.
%Since $\Q_2(2^{1/2^{\infty}})\preceq K$, we get that $X(\Q_2(2^{1/2^{\infty}}) )\neq \emptyset$.
\end{example}
%As stated in the introduction, we do not know whether every untilt of $\F_p(\!(t^{1/p^{\infty}})\!)$ is $C_2$. 
%We do not know if every untilt of $\F_p(\!(t^{1/p^{\infty}})\!)$ is $C_2$. 
While no finite extension of $\Q_p$ is $C_2$ (or even $C_i$), due to Alemu \cite{alemu}, the degree of the counterexamples in \textit{loc.cit} grows with the degree of the extension. 
%This raises the following natural question:
%\subsection{Perfectoid $C_2$ transfer}
%Prove a general version where $\F_p$ is replaced by an RC solving field.
\bq 
Given $d\in \N$, is there a finite extension $E/\Q_p$ which is $C_2(d)$?
\eq 
%\begin{rem}
%It is tempting to form an ultraproduct $K$ of such $E$'s with $e\to \infty$ and try to show it is $C_2$. We note that $K$ admits a henselian defectless valuation with divisible value group and residue field $\F_p(\!(t)\!)$. It is easy to cook up another henselian defectless valued field $(K',v')$ with divisible value group and residue field $\F_p(t)^h$, but which is also $C_2$.  The difficulty lies in comparing $K$ and $K'$. We do not know if $\F_p(t)^h$ has the same theory as $\F_p(\!(t)\!)$, and, most importantly, the naive Ax-Kochen/Ershov principle down to residue field and value group fails for henselian, defectless valued fields when the residue fields are imperfect. 
%\end{rem}

\subsection{RSC varieties over global function fields} \label{RSC}
%There are several related definitions of 
%We refer the reader to \cite{starrxu} for the definition of 
% in the literature. 
Informally, a variety $X$ is \textit{rationally simply connected} (RSC) if the space of 2-pointed rational curves on $X$ is rationally connected. This is analogous to the topological notion of simple connectedness, just as rational connectedness is analogous to path-connectedness. %The formal definition is a bit more technical. 
In our case, we need the notion of a \textit{rationally simply connected fibration}, see Hypothesis 1.2 \cite{starrxu} for the precise definition.
%We will use the notion of a \textit{rationally simple connected fibration}, which is similar to that of a rationally connected fibration (see Definition 1.2 \textit{loc.cit}).

The main theorem of Starr-Xu says that a projective scheme over a global function field has a rational point if it deforms to a rationally simply connected variety in characteristic $0$ with vanishing elementary obstruction. 
More precisely:
\begin{fact} [Theorem 1.4 \cite{starrxu}] \label{starrxumain}
Let $R$ be a henselian DVR with fraction field $K$ of characteristic $0$ and finite residue field $k$.  Let $C_R$ be a generically smooth $R$-curve. Let $f_R:X_R\to C_R$ be a flat, projective morphism such that $f_K:X_K \to C_K$ is a rationally simply connected fibration whose generic fiber has vanishing elementary obstruction. 
Then, for every generic point $\eta$ of $C_k$ contained in the $R$-smooth locus of $C_R$, the pullback $X_{\eta}=X_R\times_{C_R} k(\eta)$ has a $k(\eta)$-rational point.
\end{fact}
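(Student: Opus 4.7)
The plan is to reduce the existence of a $k(\eta)$-rational point on $X_\eta$ to the existence of an $R$-point on an auxiliary moduli space of sections of $f_R \colon X_R \to C_R$, and then to exploit Esnault's theorem on rational points of smooth projective (separably) rationally connected varieties over finite fields, combined with henselian lifting via Hensel's lemma. The guiding intuition is that rational simple connectedness of a fibration is, by design, precisely the property that the space of its sections is rationally connected; one thus climbs one rung up the hierarchy and applies the Graber--Harris--Starr/de Jong--Starr philosophy to this moduli space.

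First I would shrink $C_R$ to an affine Zariski neighbourhood of $\eta$ inside the $R$-smooth locus, which is harmless since we only need a section over the germ of $\eta$ in the special fiber. Then, for a sufficiently positive numerical class $\beta$, I would form the moduli space $\mathrm{Sec}_R = \mathrm{Sec}_\beta(X_R/C_R/R)$ parametrising sections of $f_R$ of class $\beta$; for $\beta$ large enough this is representable by a quasi-projective $R$-scheme. The main geometric input---rational simple connectedness of $f_K$ together with the vanishing of the elementary obstruction of its generic fiber---should produce a geometrically irreducible, rationally connected, $K$-rational component $S_K \subseteq \mathrm{Sec}_K$. Spreading out, I would take a projective $R$-model $\overline{S}_R$ and arrange by construction that some component of the special fiber $\overline{S}_k$ is separably rationally connected over $k$. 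Esnault's theorem then yields a smooth $k$-point of $\overline{S}_k$, and henselianity of $R$ lifts this via Hensel's lemma to an $R$-point of $\mathrm{Sec}_R$, corresponding to a section $\sigma \colon C_R \to X_R$ of $f_R$; evaluating $\sigma$ at $\eta$ gives the desired $k(\eta)$-point of $X_\eta$.

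The hardest part will be the construction of the rationally connected, $K$-rational component $S_K \subseteq \mathrm{Sec}_K$. This requires an iterated moduli-theoretic argument: starting from rational connectedness of the moduli of $2$-pointed rational curves on a general fiber of $f_K$ (which is the defining property of a rationally simply connected fibration), one assembles a comb of such curves over $C_K$, smooths it out to produce sections of the prescribed class, and then uses the vanishing of the elementary obstruction of the generic fiber of $f_K$ to descend the resulting geometrically rationally connected moduli component from $\overline{K}$ down to $K$---via a Brauer--Manin style argument tailored to function fields of curves over $K$. Controlling these constructions integrally over $R$, so that some component of the reduction $\overline{S}_k$ remains separably rationally connected and Esnault's theorem applies, is the main technical subtlety that rational simple connectedness is specifically engineered to overcome.
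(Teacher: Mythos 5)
The idea of passing to a moduli space of sections of $f_R$, using rational simple connectedness plus vanishing elementary obstruction to produce a rationally connected $K$-parameter space, and then invoking Esnault's theorem over the finite residue field, is indeed the right skeleton and matches the Starr--Xu strategy. However, there are two genuine problems with the way you close the argument.

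First, your final step lifts the $k$-point of $\overline{S}_k$ to an $R$-point of $\mathrm{Sec}_R$ via Hensel's lemma. Hensel's lemma requires the $k$-point to lie in the $R$-smooth locus of $\overline{S}_R$, and Esnault's theorem gives no such smoothness: it is a \emph{degeneration} theorem, producing a rational point on the special fiber $\overline{S}_k$, which may be a singular (even non-reduced) point of the total space $\overline{S}_R$. So the Hensel lift does not go through, and in fact it is not needed. The conclusion of the theorem only asks for a $k(\eta)$-point of $X_\eta$, i.e., a rational section of $f_k$ over the component of $C_k$ through $\eta$, not a section of $f_R$ over all of $C_R$. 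The correct move is to stay on the special fiber: a $k$-point of the (suitably compactified) section space $\Sigma(X_R/C_R/R)_k$ already corresponds to the closed image of a section of $f_k$ over $C_k$ (this is Proposition 2.2 in Starr--Xu), and evaluating at $\eta$ finishes the proof with no lifting required. What Starr and Xu actually do is construct, via de Jong--He--Starr, a smooth projective rationally connected $K$-variety $Z_K$ with a projective $K$-morphism $\nu_K\colon Z_K \to \Sigma(X_K/C_K/K)$, spread this out to $\nu_R \colon Z_R \to \Sigma(X_R/C_R/R)$ over $R$, apply Esnault to $Z_R$ to obtain $Z_k(k)\neq\emptyset$, and push forward along $\nu_R$ to a $k$-point of $\Sigma_k$, which gives the section of $f_k$.

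Second, you say you would ``arrange by construction that some component of the special fiber $\overline{S}_k$ is separably rationally connected over $k$''. This inverts the logic of Esnault's input: her theorem applies to a flat projective $R$-scheme whose \emph{generic} fiber is smooth and rationally connected, with no hypothesis at all on the special fiber, and concludes that the special fiber has a $k$-point. You should not (and in general cannot) arrange the special fiber of the section space to be SRC; the entire point of using Esnault is to handle the degeneration without such control.
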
 
%\begin{rem}
%Starr-Xu assume that $R$ is henselian but this is not really necessary since the conclusion is about $k(\eta)$ and one can base change $X_R$ and $C_R$ to $R^h$.
%; for non-henselian $R$ just consider since the conclusion is about $k$ and one can base change everything to $R^h$.
%The condition of $\eta$ being contained in the $R$-smooth locus of $C_R$ means that the irreducible component of $C_k$ corresponding to $\eta$ has multiplicity $1$.
%\end{rem}

Starr notes that there is nothing special about finite fields in the proof, apart from the fact that they are \textit{RC solving} (due to Esnault \cite{esnault2} and Esnault-Xu \cite{esnaultxu}), i.e., they admit rational points in degenerations of rationally connected varieties. 
%\cite{esnault2,esnaultxu}. 
\begin{fact} [Proposition 4.3 \cite{starr2}]
Let $R$ be a henselian prime regular DVR with fraction field $K$ of characteristic $0$ and RC solving residue field $k$.  Let $C_R$ be a generically smooth $R$-curve. Let $f_R:X_R\to C_R$ be a flat projective morphism such that $f_K:X_K \to C_K$ is a rationally simply connected fibration whose generic fiber has vanishing elementary obstruction. Then, for every generic point $\eta$ of $C_k$ contained in the $R$-smooth locus of $C_R$, the pullback $X_{\eta}=X_R\times_{C_R} k(\eta)$ has a $k(\eta)$-rational point.
\end{fact}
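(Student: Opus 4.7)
The plan is to retrace the proof of Fact \ref{starrxumain} from \cite{starrxu} and to isolate the single place where the finiteness of the residue field $k$ is actually used; it then becomes apparent that \quotes{finite} can be replaced by \quotes{RC solving} throughout, which is the substance of Starr's observation in \cite[\S 4]{starr2}.

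First, I would invoke the Starr-Xu construction of a moduli space $\mathcal{M}_K$ of pseudo-sections of $f_K:X_K\to C_K$ of sufficiently large degree. By the rational simple connectedness hypothesis, $\mathcal{M}_K$ is rationally connected over the function field $K(C_K)$, and the vanishing of the elementary obstruction on the generic fiber of $f_K$ ensures that the existence of geometric sections at the moduli level descends to actual $K(C_K)$-rational sections rather than only to points over the algebraic closure. Next, I would exploit the prime regularity of $R$ to spread $\mathcal{M}_K$ out to an integral $R$-model $\mathcal{M}_R$; this hypothesis is precisely what guarantees that the various integral models interact well with the reduction map. Specializing $\mathcal{M}_R$ to the special fiber, each generic point $\eta$ of $C_k$ contained in the $R$-smooth locus of $C_R$ gives rise to a $k(\eta)$-scheme whose geometric fiber is a degeneration of a rationally connected variety.

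The last step in \cite{starrxu} converts this datum into an actual $k(\eta)$-rational point of $X_{\eta}$. After a further specialization together with the henselianity of $R$ and the valuative criterion of properness, this reduces to finding a rational point on a degeneration of a rationally connected variety over $k$---which is exactly the RC solving hypothesis. The main obstacle, and the main content of a rigorous write-up, is a careful audit of \cite{starrxu} to confirm that no other special property of finite fields (such as finiteness of Galois cohomology groups, finiteness of Brauer groups, or Lang-Weil-type point counts) is tacitly used in any auxiliary step. Starr's remarks in \cite{starr2} indicate that this verification is essentially a bookkeeping exercise, and with it in place the proof of \cite{starrxu} transfers verbatim.
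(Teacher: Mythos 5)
Your high-level strategy is the right one, and it matches what the paper alludes to in citing Starr: retrace the Starr--Xu proof, locate the one place finiteness of $k$ enters (producing rational points on a degeneration of a rationally connected variety, via Esnault's theorem over finite fields), and observe that the hypothesis \emph{RC solving} is precisely what this step needs. That is the substance of Proposition 4.3 in \cite{starr2}, and your sketch correctly singles it out.

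Two points are off or missing, however. First, the concrete mechanism by which rational simple connectedness and vanishing elementary obstruction are cashed in is de Jong--He--Starr's Theorem 13.1: one forms $\Sigma(X_R/C_R/R)$, the closure inside $\text{Hilb}_{X_R/R}$ of the locus of closed images of sections, and Theorem 13.1 produces a smooth projective rationally connected $K$-variety $Z_K$ together with a projective morphism $\nu_K:Z_K\to \Sigma(X_K/C_K/K)$; it is $Z_R$ (an $R$-model of $Z_K$), not the moduli of pseudo-sections directly, to which the RC solving hypothesis is applied, and a $k$-point of $Z_k$ is then pushed to a $k$-point of $\Sigma$ which corresponds to the image of a section of $f_k$. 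Your sketch speaks of the moduli space being rationally connected over $K(C_K)$ and of descent of geometric sections, but it never names the actual rationally connected variety whose degeneration is being controlled, and that is the load-bearing technical input. Second, you misattribute the role of the \emph{prime regular} hypothesis: it is not what lets you spread $\mathcal{M}_K$ out to an $R$-model (spreading out over a DVR is automatic), but a mixed-characteristic technical condition on $R$ --- being weakly unramified over a DVR with finite residue field --- needed so that the residue field situation stays compatible with the rational connectedness machinery; in the present paper it plays a role analogous to requiring $\overline{\Q}\subseteq K$ in Proposition \ref{construction3}(ii). As written, your proposal reads more as a restatement of Starr's observation than as a proof, because the de Jong--He--Starr step (the bridge from the RSC hypothesis to an actual rationally connected variety to degenerate) is left implicit.
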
 
%Using the results of \cite{KK4} instead of \cite{esnault2}, we can recast the above result as a theorem in characteristic $0$:
%\bt 
%Let $K$ be a maximal totally ramified extension of $\Q_p$ and $C$ be a smooth projective $K$-curve. Let $f:X\to C$ be a rationally simply connected fibration whose generic fiber has vanishing elementary  obstruction. Then $f$ has a section.
%\et 
%\begin{proof}
%There is a rationally connected $L$-variety $Z$ and a morphism $Z \to \text{Sec}(X/C/K)$. Since $K$ is geometrically $C_1$, we have that $X(K)\neq \emptyset$ and therefore $\text{Sec}(X/C/K)$ has a $K$-point. This parametrizes a section of $f$ defined over $K$, as needed. 
%\end{proof}
For our purposes, we need a version of the above result which is expressible in first-order logic. 
%The point is that there is an $\forall \exists$-sentence $\varphi_M$ expressing the surjectivity of $f$ on rational points over any field and therefore surjectivity can be transferred between fields that are elementarily equivalent. 
This was already done by Starr-Xu, although it was only documented in the first arXiv version \cite{starrxu1}. Using this, they gave an extension of the Ax-Kochen theorem in the context of rationally simply connected varieties, see Corollary 1.14 \cite{starrxu1}. We provide a brief overview in the next section. %following \S 4 \cite{starr2}.

%we can transfer of surjectivity.
%In order to transfer this result to perfectoid fields, we need to talk about schemes over $\Z$.
%some elementary statement $\varphi$. For this, we need the following notion:
%
\subsection{First-order version} \label{first-order}
The idea is to work with a family $f:X\to M$ defined over $\Q$ which satisfies some rational simple connectedness condition. 
%We can then transfer surjectivity of $f$ on rational points between any two elementarily equivalent fields.
The definitions below can be made over a more general base but we only state them over $\Q$ for simplicity.
\begin{definition}[Definition 4.1 \cite{starr2}]
A \textit{parameter datum} over $\Q$ %of dimension $m$ 
with a codimension $>1$ compactification is a datum 
$$((M, f: X \to M,\mathcal{L}_M ),(\overline{M}, \Oo_{\overline{M}}(1), \iota)) $$ 
where: 
\begin{enumerate}[label=(\roman*)]

\item $M$ is a smooth $\Q$-variety.
\item $f:X \to M$ is a proper, flat morphism.
\item $\mathcal{L}_M$ is an $f$-very ample invertible sheaf on $X$.
\item $\overline{M}$ is a proper $\Q$-variety.
\item $\iota: M\to \overline{M}$ is  a dense open immersion and $\overline{M}\backslash M$ has codimension $>1$.
\end{enumerate}

\end{definition}

For every $r \geq 1$, the complete linear system $H^0
(\overline{M}, \Oo_{\overline{M}}(r))$ induces a closed immersion into $\mathbb{P}^{N_r}_{\Q}$ where $N_r=h^0
(\Oo_{\overline{M}}(r))-1$. Let $\text{Gr}(N_r,m-1)$ be the Grassmannian variety parameterizing linear subspaces of $\mathbb{P}^{N_r}_{\Q}$ of codimension $m - 1$ and $G_r\subseteq \text{Gr}(N_r,m-1)$ be the open subset parameterizing those subspaces whose intersection with $\overline{M}$ is a smooth, irreducible curve which is contained in $M$.
\begin{definition}[Definition 4.2 \cite{starr2}]
A parameter datum as above satisfies the \textit{RSC property} if there is $r_0\in \N$ and a sequence $(W_r)_{r\geq r_0}$ of dense open subschemes $W_r\subseteq G_r$ such that for every algebraically closed field  $K$ of characteristic $0$ and every $K$-point of $W_r$ parametrizing a curve $C_K\subset M_K$, the pullback family $X\times_M C_K\to C_K$ is a rationally simply connected fibration whose generic fiber has vanishing elementary obstruction.
\end{definition}
The next result is the last assertion of Proposition 4.4 \cite{starr2}: 
\begin{fact}  [Proposition 4.4 \cite{starr2}]\label{standardmodel}
%Let $M$ be a smooth $\Q$-variety and $f:X\to M$ be a flat projective morphism 
%For a parameter space $M$ over $\Z_p$ 
Consider a parameter datum as above satisfying the RSC property. Let $k$ be an RC solving field and $C$ be a geometrically integral $k$-curve. Then the map
$$f:X(k(C))\to M(k(C))$$
is surjective.  
\end{fact}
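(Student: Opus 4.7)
The plan is to produce a rationally simply connected pullback family over $C$ and then apply the section-existence result for RSC fibrations over DVRs with RC solving residue field (Proposition 4.3 \cite{starr2}). First, I would extend $\sigma: \Spec k(C) \to M$ to a morphism from a smooth projective model of $C$ into $M$: after replacing $C$ by its smooth projective model, $\sigma$ gives a rational map $C \dashrightarrow M \hookrightarrow \overline{M}$, which extends to $\tilde{\sigma}: C \to \overline{M}$ by the valuative criterion of properness. A generic deformation of $\tilde{\sigma}$, obtained from the embedding of $\overline{M}$ via the linear system of $\Oo_{\overline{M}}(r)$ for sufficiently large $r$ together with the codimension $>1$ condition on $\overline{M}\setminus M$, yields a morphism $\sigma: C \to M$.

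Second, for $r \geq r_0$ with $W_r(k)\neq\emptyset$ (arranged by choosing $r$ sufficiently large, invoking RC solving to produce $k$-points on the dense open $W_r \subset G_r$), the RSC property of the parameter datum provides curves $C' \subset M_k$ whose pullback families $X \times_M C' \to C'$ are rationally simply connected fibrations with vanishing elementary obstruction. I would then vary $C'$ in a one-parameter family through a chosen $k$-point of $\sigma(C)$ and degenerate to a reducible curve containing $\sigma(C)$ as a component, thereby transferring the RSC and vanishing elementary obstruction properties to the pullback $X \times_M C \to C$ (or at least arranging that sections on nearby $C'$ specialize to a section over $\sigma(C)$).

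Third, I would lift the situation to a prime regular henselian DVR $R$ with residue field $k$ and, when $k$ has positive characteristic, fraction field of characteristic $0$, whose existence is guaranteed by Cohen's structure theorems. The parameter datum, defined over $\Q$, lifts naturally. Applying Proposition 4.3 \cite{starr2} then produces a $k(\eta)$-rational point of the pullback $X_\eta$ for $\eta$ a generic point of $C_k$, which is precisely the sought $k(C)$-point of $X$ lifting $\sigma$.

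The main obstacle will be the transfer argument in the second step: certifying that the RSC property and the vanishing elementary obstruction pass from the good curves parametrized by $W_r$ to the a priori arbitrary curve $\sigma(C)$. One approach is to exhibit these as open conditions in a suitable Hilbert-type moduli of curves in $M$, so that they hold on a Zariski neighborhood of the $W_r$-point; another is to produce a flat degeneration from a $W_r$-curve to a reducible curve containing $\sigma(C)$ and argue by specialization, using the RC solving property of $k$ to pick out $k$-rational sections. This deformation step must then be combined with a DVR lift compatible with the parameter datum and the family structure, which is the technical heart of the proof.
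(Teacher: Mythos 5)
The paper does not give a proof of this statement; it is cited verbatim from Proposition 4.4 of Starr's paper \cite{starr2}. So I can only assess your proposal on its own merits, and I think there is a genuine gap in the middle step.

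Your step two works over the residue field $k$: you want to produce curves $C'\subset M_k$ whose pullbacks $X\times_M C'\to C'$ are rationally simply connected fibrations, and then degenerate in a one-parameter family over $k$. But the RSC property of the parameter datum is stated only for algebraically closed fields of characteristic $0$, so it says nothing about $W_r$-curves over $k$ (which may have positive characteristic). More fundamentally, you do not need, and in general cannot have, the RSC property or vanishing elementary obstruction for the pullback to $\sigma(C)$ over $k$; that condition only needs to hold over the characteristic-$0$ fraction field $K$ of the DVR $R$. The specialization from $K$ down to $k(\eta)$ is exactly what Proposition 4.3 of \cite{starr2} performs for you, so the degeneration should take place \emph{inside the DVR}, not over $k$ beforehand. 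Concretely: first lift to $R$ (your step three), embed $\overline{M}_R\subseteq \mathbb{P}^{N_r}_R$ for $r$ large, and then choose a codimension $m-1$ linear subspace $\Lambda_R$ such that $\Lambda_K$ lies in $W_r(K)$ while $\Lambda_k$ contains the image of $\tilde{\sigma}(\tilde{C})$. (This is possible for $r$ large because the span of the image has dimension $O(r)$ while $N_r$ grows like $r^{\dim M}$, and the $W_r$-condition on $\Lambda_K$ is open and dense.) Now apply Proposition 4.3 to the $R$-curve $\Lambda_R\cap\overline{M}_R$ with the pulled-back family: it yields a $k(\eta)$-rational point for $\eta$ a generic point of the special fiber, and one arranges $\eta$ to be the generic point of the component coming from $\sigma(C)$. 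Your phrasing ``transferring the RSC and vanishing elementary obstruction properties to the pullback $X\times_M C\to C$'' is precisely the step that has no content; the whole point of the DVR formulation is that you never need those properties in positive characteristic.

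A secondary issue: in your first step, the ``generic deformation'' of $\tilde{\sigma}$ to avoid the boundary $\overline{M}\backslash M$ would change the $k(C)$-point $m$ you are trying to lift, so it is circular. You do not need $\tilde{\sigma}(\tilde{C})\subseteq M$; you only need the generic point of $\tilde{C}$ to land in $M$, which holds by hypothesis, and the boundary is handled by the choice of $\Lambda_R$ above (codimension $>1$ in $\overline{M}$ ensures a generic linear curve section avoids it). Finally, beware that $\sigma$ need not be birational onto its image, so $k(\sigma(C))$ can be a proper subfield of $k(C)$; this requires an additional argument to pass from a $k(\sigma(C))$-point to a $k(C)$-point (easy, by base change, but worth saying).
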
 

%The idea is that the pullback $X\times_M k(C)$ extends to a family 
%$X_R\to C_R$ satisfying the assumptions of Fact \ref{starrxumain}. More precisely, the special fiber $C_k$ of $C_R$ contains $C$
% (see Proposition 1.12 \cite{starrxu1}). 

\subsection{RSC varieties over perfectoid fields} \label{RSCperf}
%The following result is similar to Denef's transfer of surjectivity theorem:
%We are now ready to state our theorem:
%\bt 
%Let $K$ be a maximal totally ramified extension of $\Q_p$ and $C$ be the function field of a curve over $K$. Let $X$ be a rationally simply connected $K(C)$-variety with vanishing elementary obstruction. Then $X(K(C))\neq \emptyset$.
%\et 
%A smooth, projective $k$-variety whose geometric Picard group equals $\Z$ has vanishing elementary obstruction in the sense of Colliot-Th\'el\`ene and Sansuc if the Picard group surjects to the geometric Picard group.
%The following \quotes{transfer of surjectivity} is an immediate consequence of Proposition \ref{}.
\bt  \label{transferofsurj}
Let $f:X\to M$ be a finite type morphism of $\Q$-varieties. Let $F$ be a perfectoid field of characteristic $p$ and $E_{\infty}$ be a maximal totally ramified extension of $\Q_p$.  Then there exists a finite subextension $E/\Q_p$ of $E_{\infty}/\Q_p$ such that for every untilt $K$ of $F$ containing $E$, we have that  
$$f:X(K)\to M(K)$$ 
is surjective if and only if 
$$f:X(K')\to M(K')$$
is surjective, for any tame field $K'$ extending $E$ with divisible value group and residue field elementarily equivalent to $F$.
\et 
\begin{proof}
Since $X,M$ and $f$ are of finite type over $\Q$, there is an $\forall \exists$-sentence $\varphi$ in the language of rings such that for any field $K$ of characteristic $0$, we have that 
$$\varphi \mbox{ holds in } K\iff f:X(K)\to M(K) \mbox{ is surjective}$$
%By Theorem \ref{standardmodel}, we know that $\varphi_M$ holds in the function field of any smooth curve over $E_{\infty}$. By Corollary 1.1.3 \cite{KK4}, we know that $E_{\infty}$ is geometrically $C_1$. By Hogadi-Xu \cite{hogadi} (or Corollary 3.2.12 \cite{KK4}), we get that $E_{\infty}$ is also RC solving. By Fact \ref{standardmodel}, we see that $\varphi_M$ holds in every function field of a curve over $E_{\infty}$.
%Now let $(K',v')$ be as in the proof of Theorem \ref{construction4}. Since $K'$ is a colimit of function fields of curves over $L$ and $\varphi_M$ is an $\forall \exists$-sentence, we see that $\varphi_M$ also holds in $K'$. 
Applying Proposition \ref{perfprop} to $\varphi$ yields the desired conclusion.
%, there is a finite subextension $E/\Q_p$ of $E_{\infty}/\Q_p$ such that $\varphi$ holds in  every untilt of $F$ containing $E$, as needed.
\end{proof}

\bt \label{generalc2}
Consider a parameter datum satisfying the RSC property. Let $E_{\infty}$ be a maximal totally ramified extension of $\Q_p$. Then there exists a finite subextension $E/\Q_p$ of $E_{\infty}/\Q_p$ such that for every untilt $K$ of $\F_p(\!(t^{1/p^{\infty}})\!)$ containing $E$, the map 
$$f:X(K)\to M(K)$$ 
is surjective.
\et 
\begin{proof}
By Corollary 1.1.3 \cite{KK4}, we know that $E_{\infty}$ is geometrically $C_1$. By Hogadi-Xu \cite{hogadi} (or Corollary 3.2.12 \cite{KK4}), we get that $E_{\infty}$ is also RC solving. By Fact \ref{standardmodel}, we know that 
$$f:X(E_{\infty}(C))\to M(E_{\infty}(C))$$
is surjective for any geometrically integral $E_{\infty}$-curve $C$.
%By Fact \ref{standardmodel}, we see that $\varphi_M$ holds in every function field of a curve over $E_{\infty}$.
Now let $(K',v')$ be as in the proof of Theorem \ref{perfectoidc2}. Since $K'$ is algebraic over $E_{\infty}(t)$ and $\overline{E_{\infty}}\cap K=E_{\infty}$,  we see that $K'$ is a colimit of function fields of geometrically integral $E_{\infty}$-curves. It easily follows that 
$$f:X(K')\to M(K')$$
is surjective. We conclude from Theorem \ref{transferofsurj}.
%By Proposition \ref{perfprop}, there is a finite subextension $E/\Q_p$ of $E_{\infty}/\Q_p$ such that $\varphi_M$ holds in  every untilt of $\F_p(\!(t^{1/p^{\infty}})\!)$ containing $E$, as needed.
\end{proof}
Theorem \ref{secondmainagain} is a special case of the above theorem by taking $f:X \to M$ the universal family of hypersurfaces in $\mathbb{P}^n$ of degree $d$. If $n\geq d^2$, then the RSC condition is satisfied by the last assertion of Theorem 3.16 \cite{starrxu}. For any field $K$, the surjectivity of $f:X(K)\to M(K)$ means precisely that every hypersurface of degree $d$ with $n\geq d^2$ has a $K$-rational point, i.e., that $K$ is $C_2(d)$.

\subsubsection*{Acknowledgements} 
I wish to thank H{\'e}l{\`e}ne Esnault, Franziska Jahnke and Peter Scholze for pointing me towards the questions addressed in this paper. I also thank Jean-Louis Colliot-Th{\'e}l{\`e}ne, Philip Dittmann and Tom Scanlon for fruitful discussions on these topics. Finally, I am grateful to the anonymous referees for their thoughtful comments.
%the only caveat is that Starr makes a special assumption on the DVR which is too restrictive for our purposes. In joint work in progress with Fumiaki Suzuki (Hannover), we are trying to strengthen Starr's theorem by removing this assumption.

\bibliographystyle{alpha}
\bibliography{references}
%\Addresses
\end{document}